\documentclass[11pt, reqno]{amsart}
\usepackage[cp1251]{inputenc}
\usepackage{amssymb,upref}
\usepackage{a4wide}
\usepackage{verbatim}
\usepackage{enumitem}
\usepackage{diagbox} 
\usepackage{graphicx}
\graphicspath{{pictures/}}
\DeclareGraphicsExtensions{.pdf,.png,.jpg}
\usepackage{chngcntr}
\usepackage{amsthm}
\usepackage{subcaption}
\usepackage{booktabs}
\usepackage{float}
\theoremstyle{plain}

\newtheorem{theorem}{Theorem}[section]
\newtheorem{lemma}[theorem]{Lemma}
\newtheorem{corollary}[theorem]{Corollary}

\newtheorem{proposition}[theorem]{Proposition}

\theoremstyle{definition}

\newtheorem{definition}[theorem]{Definition}

\theoremstyle{remark}

\def\N{\ensuremath{\mathbb N}}

\usepackage{xcolor}

\begin{document}

\title{From Dimension Drop to Aperiodic Order}

\author[N. Jurga]{Natalia Jurga}\address{Natalia Jurga\\ Mathematical Institute\\
University of St Andrews\\
North Haugh\\
St Andrews\\
KY16 9SS\\
Scotland \\}
\email{naj1@st-andrews.ac.uk}
\urladdr{https://www.nataliajurga.co.uk/}
\author[D. Karvatskyi]{Dmytro Karvatskyi}\address{Dmytro Karvatskyi\\  Institute of Mathematics of NAS of Ukraine\\
3, Tereshchenkivska Street\\
Kyiv \\  01030\\Ukraine}
\email{karvatsky@imath.kiev.ua}
\urladdr{https://dmytro-karvatskyi.github.io/}
\keywords{self-similar set, dimension drop, achievement set, aperiodic order, Cantorval}
\thanks{NJ was partially supported by a Leverhulme Early Career Fellowship (ECF-2021-385). DK worked at the University of St Andrews
within the framework of the Isaac Newton Institute Solidarity Programme and also received
additional support from the London Mathematical Society. The authors thank Thomas Jordan for highlighting the connection to projections of the four-corner Cantor set, and Henna Koivusalo for conversations about aperiodic tilings.}

\begin{abstract}
We consider the parametrised family of sets
\[
E(x,y)=\left\{\sum_{n=1}^{\infty}\frac{\varepsilon_n}{4^n}:
(\varepsilon_n) \in \{0,x,y,x+y\}^{\mathbb{N}}\right\} \;\;\ \textnormal{for $(x,y) \in \N^2$.}
\]

This family can be viewed through three lenses: (a) as homogeneous self-similar sets; (b) as achievement sets of bi-geometric series; or (c) as the set of `rational' orthogonal projections of the four-corner Cantor set. 
We synthesise these three perspectives to obtain a complete topological classification of \(E(x,y)\) for $(x,y) \in \N^2$.

Next, we collapse this topological classification to a binary one according to whether or not $E(x,y)$ has interior. When this binary classification is visualised, it reveals a two-colour tiling $T$ of the lattice \(\mathbb N^2\), which, despite being visibly structured, turns out to be \emph{aperiodic}; indeed, we prove it has no non-trivial translational symmetries. Due to the rigidity of our model, this same binary classification simultaneously captures several dichotomies. Most notably, when the family $\{E(x,y)\}_{(x,y) \in \N^2}$ is viewed through the theory of self-similar sets, $T$ can be seen to describe the emergence of dimension drop within the family.

Finally we examine the mechanism underlying the tiling's aperiodic order. By considering the number-theoretic properties of the tiling, we characterise its substitution structure, and discover that $T$ is a factor of a substitution tiling on  four ``hidden'' arithmetically defined states.

\end{abstract}

\maketitle

\section{Introduction}

Many fundamental problems in fractal geometry share a common objective: to distinguish those fractal objects whose geometric complexity is fully realised from those for which some of this complexity is compressed by arithmetic, algebraic, or geometric structure. For example, the exact overlaps conjecture \cite{Hochman} concerns the distinction between self-similar sets whose Hausdorff dimension attains the expected similarity dimension and those for which overlaps force a dimension drop. Similarly, refinements of Marstrand's projection theorem \cite{ffj} seek to identify the ``exceptional'' projections for which the complexity of a set is unusually compressed under projection. Most of the literature on these topics concerns itself with establishing the conditions needed to rule out phenomena such as dimension drop.

In this paper we approach dimension drop from a different perspective. Rather than treating dimension drop as an exceptional phenomenon in an individual fractal, we investigate the geometric structure that the collection of dimension-drop parameters possesses. To this end, we consider a natural parametrised family of self-similar sets where the parameters at which dimension drop occurs can be explicitly determined. In our model, this set of dimension-drop parameters itself exhibits a rich form of aperiodic order.

The family of sets we consider is defined as

\[
E(x,y)=\left\{\sum_{n=1}^{\infty}\frac{\varepsilon_n}{4^n}:
(\varepsilon_n) \in \{0,x,y,x+y\}^{\mathbb{N}}\right\} \;\;\ \textnormal{for $(x,y) \in \N^2$.}
\]

This family admits several complementary interpretations. It can be viewed simultaneously as a parametrised family of self-similar sets, as a family of achievement sets of bi-geometric series or as the set of rational projections of the four-corner Cantor set; we give further detail on each of these interpretations in \S \ref{persp}. These different perspectives connect the model to several well-developed areas of fractal geometry and additive combinatorics, while retaining a particularly simple two-dimensional parameter space.

\subsection{Aperiodic order of the dimension-drop parameter tiling}

We investigate the set of parameters $(x,y)\in\mathbb{N}^2$ for which
$E(x,y)$ exhibits dimension drop. Rather than forming an unstructured
collection of exceptional parameters, these parameters themselves
organise into a highly structured, but non-periodic, geometric pattern in the parameter plane.

We begin by introducing the arithmetic sets which determine this pattern. Let \(V\) denote the set of positive integers whose last non-zero digit in their quaternary expansion is either \(1\) or \(3\), and let \(D=\mathbb{N}\setminus V\). Equivalently, \(D\) consists of those positive integers whose last non-zero quaternary digit is \(2\). 

Define the characteristic function
\[
c\colon\mathbb{N}\longrightarrow\{0,1\}
\]
of the set \(V\) by
\[
c(n)=
\begin{cases}
1, & \text{if }n\in V,\\
0, & \text{if }n\in D.
\end{cases}
\]

Listing the elements of \(V\) and \(D\) in increasing order gives the sequences \((v_n)\) and \((d_n)\), which are known as the \textbf{vile numbers} and \textbf{dopey numbers}, respectively. Their initial terms are

$$
(v_n)_{n\geq 1}
=
(1,3,4,5,7,9,11,12,13,15,16,17,19,20,21,23,25,27,28,29,31,33,\ldots)
$$

and

$$
(d_n)_{n\geq 1}
=
(2,6,8,10,14,18,22,24,26,30,32,34,38,40,42,46,50,54,56,58,62,\ldots).
$$

These sequences arise in combinatorial game theory \cite{Fra11,Fraenkel} and are listed in the On-Line Encyclopedia of Integer Sequences as A003159 and A036554, respectively. Their characteristic sequences are \(2\)-automatic and are closely related to the Thue--Morse sequence \cite{AAB95}. 

We use these arithmetic sets to define the parameter tiling, which is visualised in Figure~\ref{tiling}.

\begin{definition}\label{def:tiling}
The \emph{parameter tiling} is the two-colouring
\[
T:\mathbb{N}^2\longrightarrow\{0,1\}
\]
defined by
\[
T(x,y)=
\begin{cases}
1, & \text{if } c(x)\neq c(y),\\
0, & \text{if } c(x)=c(y).
\end{cases}
\]
We refer to the parameters in \(T^{-1}(1)\) as \emph{black} and those
in \(T^{-1}(0)\) as \emph{white}.
\end{definition}

\begin{figure}[h]
\center{\includegraphics[scale=0.4]{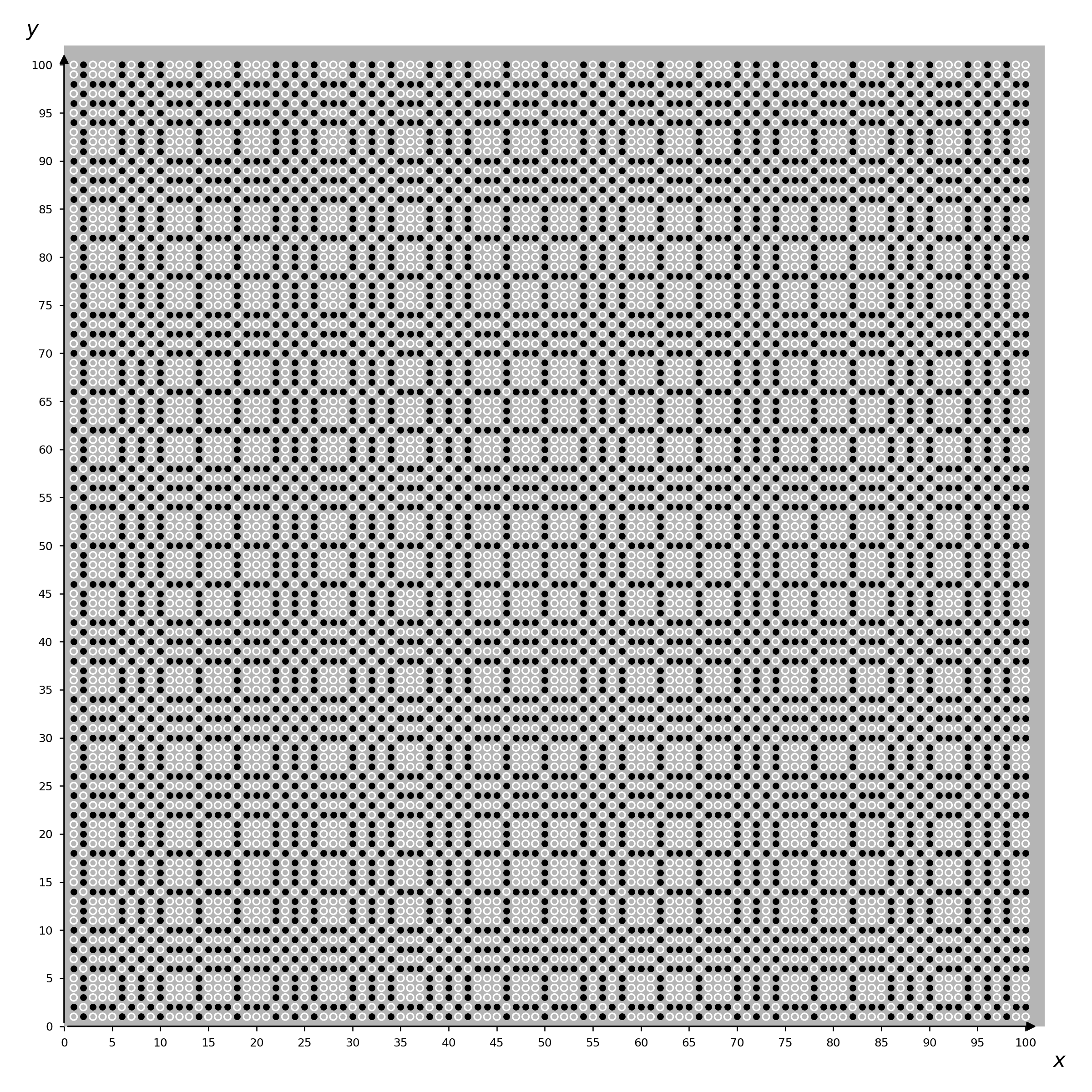}}
\caption{The parameter tiling $T$: white corresponds to \(E(x,y)\) having empty interior (and ``dimension drop''),  black corresponds to \(E(x,y)\) having non-empty interior (and no ``dimension drop'').}
\label{tiling}
\end{figure}

The relevance of the partition \(\mathbb N=V\cup D\) to our fractal family is captured by the following classification.

\begin{theorem}\label{main}
For every \(x,y\in\mathbb{N}\),
\[
T(x,y)=1
\quad\Longleftrightarrow\quad
E(x,y)\text{ has non-empty interior}.
\]
Equivalently,
\begin{enumerate}[label=\arabic*)]
    \item if \(x,y\in V\) or \(x,y\in D\), then \(E(x,y)\) has empty
    interior;
    \item if \(x\in V\) and \(y\in D\), or vice versa, then \(E(x,y)\)
    has non-empty interior.
\end{enumerate}
\end{theorem}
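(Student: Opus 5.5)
The plan is to reduce Theorem~\ref{main} to a small number of normal forms by elementary scaling identities, and then treat each normal form separately.

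\textbf{Reformulating the colouring.} First I would rewrite $c$ arithmetically. Write $n=4^k m$ with $4\nmid m$. The last nonzero quaternary digit of $n$ is $m \bmod 4$, and this is $2$ exactly when $m\equiv 2 \pmod 4$. Hence $n\in V$ if and only if the $2$-adic valuation $v_2(n)$ is even, and so
\[
T(x,y)=1 \iff v_2(x)+v_2(y)\ \text{is odd}.
\]
In particular $T(kx,ky)=T(x,y)$ for every $k\in\mathbb N$, and $T$ is unchanged when a single coordinate is multiplied by $4$.

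\textbf{Compatible reductions for $E$.} Let $C=\{\sum a_n4^{-n}: a_n\in\{0,1\}\}$, so that $E(x,y)=xC+yC$.
\begin{itemize}
\item Since $E(kx,ky)=kE(x,y)$, interior is invariant under common scaling.
\item Since $4C=\{0,1\}+C$, we get $E(x,4y)=E(x,y)\cup\bigl(E(x,y)+y\bigr)$. A finite union of closed nowhere dense sets is nowhere dense (Baire), so $E(x,4y)$ has interior if and only if $E(x,y)$ does. The same holds for $E(4x,y)$.
\end{itemize}
Using these two moves I can assume $\gcd(x,y)=1$ and $v_2(x),v_2(y)\in\{0,1\}$. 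Coprimality rules out $v_2(x)=v_2(y)=1$. This leaves two cases: (A) one coordinate is odd and the other is $\equiv 2 \pmod 4$, where $T=1$; and (B) both coordinates are odd, where $T=0$.

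\textbf{Case (A): interior is present.} Here the digits $\{0,x,y,x+y\}$ form a complete residue system modulo $4$. By induction, $D_N=\{\sum_{n\le N}\varepsilon_n4^{N-n}\}$ is then a complete residue system modulo $4^N$. This gives $4^{-N}D_N+\mathbb Z\supseteq 4^{-N}\mathbb Z$ for every $N$. Since $E\supseteq 4^{-N}D_N$ and $E+\mathbb Z$ is closed, it follows that $E+\mathbb Z=\mathbb R$. By Baire, $E$ has non-empty interior.

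\textbf{Case (B): no interior.} This is the case I expect to be the main obstacle. The residues of the digits modulo $4$ are $\{0,x,y,x+y\}$, and they always repeat:
\begin{itemize}
\item if $x\equiv y \pmod 4$, then $x$ and $y$ coincide;
\item otherwise $x+y\equiv 0 \pmod 4$, so $x+y$ coincides with $0$.
\end{itemize}
A residue collision alone does not force overlaps, so I need a genuine criterion.

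My first choice is the Lagarias--Wang theory of integral self-affine tiles with $|D|=b=4$. It gives three facts: $E$ has interior iff it has positive Lebesgue measure; in that case it tiles $\mathbb R$ by translations; and the mask polynomial $(1+t^x)(1+t^y)$ must satisfy a cyclotomic divisibility condition (Coven--Meyerowitz type, valid for prime-power base). The $2$-power cyclotomic factors of $1+t^m$ are $\Phi_{2^{v_2(m)+1}}$. When both $x$ and $y$ are odd, both factors contribute $\Phi_2$, and $\Phi_4$ is missing. This should violate the required condition, which needs both $\Phi_2$ and $\Phi_4$.

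As a fallback I would argue via Fourier analysis. Suppose the measure were positive. Then the natural self-similar measure equals normalised Lebesgue measure on $E$, and $E$ would tile by a lattice. Its Fourier transform is
\[
\prod_n \cos(\pi x\xi4^{-n})\cos(\pi y\xi4^{-n})
\]
up to a phase, and in the tiling situation it must vanish at all nonzero points of the dual lattice. I would show that this fails at $\xi=4^k$ for both odd $x,y$. That contradiction gives measure zero and hence empty interior.
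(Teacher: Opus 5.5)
Your reformulation $T(x,y)=1\iff v_2(x)+v_2(y)$ odd is correct, and so are the two reductions and Case (A). Case (A) is more elementary than the paper's treatment: after the same $\nu_2$-parity reduction to coprime $p,q$, the paper quotes Mattila's Theorem~10.5(c), whereas your argument is self-contained ($D_N$ is complete modulo $4^N$, so $E+\mathbb Z=\mathbb R$, so $E$ has interior by Baire).

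The gap is Case (B), which you rightly flag as the main obstacle. Neither of your two routes closes it, because both rest on general claims that your own identity $E(x,4y)=E(x,y)\cup(E(x,y)+y)$ refutes. Take $E(1,8)=[0,1]\cup[2,3]$, which has interior.
\begin{itemize}
\item Route 1: $\Phi_4\nmid(1+t)(1+t^8)$ (evaluate at $t=i$), so ``the tile condition needs both $\Phi_2$ and $\Phi_4$'' is not a valid criterion.
\item Route 2: $E(1,8)$ has measure $2$ but is not a fundamental domain for $2\mathbb Z$, since its two intervals are congruent modulo $2$. So a positive-measure $E(x,y)$ need not tile by any lattice, and there is no dual lattice on which $\widehat\mu$ is forced to vanish.
\end{itemize}
Route 1 could be repaired by quoting the precise base-$4$ condition: two cyclotomic factors $\Phi_{2^{l_1}},\Phi_{2^{l_2}}$ of the mask with $l_1\not\equiv l_2\pmod 2$. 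This fails when $\Phi_2$ is the only $2$-power cyclotomic factor. As written, though, Route 1 is a hedged black box.

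Your Fourier computation does close Case (B) if you replace lattice tiling by the Riemann--Lebesgue lemma.
\begin{itemize}
\item Let $\mu$ be the uniform self-similar measure, so $\widehat\mu(\xi)=\prod_{n\ge1}\phi(4^{-n}\xi)$ with $\phi(\eta)=\tfrac14(1+e^{-2\pi i x\eta})(1+e^{-2\pi i y\eta})$.
\item Since $\phi\equiv1$ on $\mathbb Z$, you get $\widehat\mu(4^k)=\widehat\mu(1)$ for every $k\ge0$.
\item For odd $x,y$, $|\widehat\mu(1)|=\prod_{m\ge1}|\cos(\pi x4^{-m})\cos(\pi y4^{-m})|>0$. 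No factor vanishes because $2x4^{-m},2y4^{-m}\notin\mathbb Z$, and the product converges because $1-|\cos\theta|\le\theta^2/2$ and $\sum_m4^{-2m}<\infty$.
\item If $\mathcal L^1(E)>0$, then $\mathcal L^1(E)\le\sum_d\mathcal L^1(f_d(E))=\mathcal L^1(E)$ forces the four pieces to overlap only in null sets. Hence normalised Lebesgue measure on $E$ satisfies the self-similarity equation and equals $\mu$.
\item Riemann--Lebesgue then gives $\widehat\mu(4^k)\to0$, a contradiction. So $\mathcal L^1(E)=0$ and $\operatorname{int}E=\varnothing$.
\end{itemize}
In Case (A) the factor $\cos(\pi y/4)$ is zero, consistent with your result there. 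With this repair your proof is entirely self-contained, whereas the paper outsources both directions to Mattila's Theorem~10.5. That citation does, however, also give $\dim_{\mathrm H}E<1$ and $E=\overline{\operatorname{int}E}$, which the paper uses for Theorem~\ref{refinement}.
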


Because of the particularly rigid structure of the family $\{E(x,y)\}_{(x,y) \in \N^2}$, the tiling $T$ simultaneously captures several seemingly different
properties of the set and of the underlying iterated function
system, see Proposition \ref{tableprop}. Namely,

\begin{table}[H]
    \centering
    \caption{Equivalent properties of the black and white parameters.}
    \label{tab}
    \begin{tabular}{cc}
        \toprule
        \textbf{Black} & \textbf{White} \\
        \midrule
        $E(x,y)$ has non-empty interior & $E(x,y)$ has empty interior \\
        $E(x,y)$ has positive measure & $E(x,y)$ has zero measure \\
        $\dim_H E(x,y)=1$ & $\dim_H E(x,y)<1$ \\
        The underlying IFS has no exact overlaps & The underlying IFS has exact overlaps \\
        The underlying IFS satisfies OSC & The underlying IFS does not satisfy OSC \\
        \bottomrule
    \end{tabular}
\end{table}

Perhaps the most striking feature of $T$ is that, despite its apparent order, it is aperiodic. Indeed, in Theorem \ref{aperiodicthm} we prove that $T$ admits no non-trivial translational period. 

We then characterise the mechanism underlying this aperiodic order. In Theorem \ref{subthm} we will show that the tiling admits a constant length substitution structure. In particular, Corollary \ref{cor:factor-substitution} shows that $T$ is a factor of a substitution tiling on four states, determined by the arithmetic type of the pair $(x,y)$ according to the membership of $x$ and $y$ in $V$ and $D$.  

In \S \ref{final}, we place these results in a broader context by discussing how the emergence of aperiodic order in $T$ parallels the appearance of rich structure in classical parameter sets such as the Mandelbrot set and the Rauzy gasket.

\subsection{Full topological classification}

Theorem~\ref{main} gives a binary classification according to whether \(E(x,y)\) has interior. Next, we provide the complete classification of $E(x,y)$ which describes all possible topological types of the attractor.

We say that a set is a Cantor set if it is homeomorphic to the middle-third Cantor set, and we say that a set is a Cantorval if it is homeomorphic to
 $$C_{1/3} \cup \bigcup_{n=1}^{\infty} G_{2n-1} = [0, 1] \setminus \bigcup_{n=1}^{\infty} G_{2n},$$
where $C_{1/3}$ is the middle-third Cantor set and $G_n$ is the union of the $2^{n-1}$ open middle thirds that are removed from $[0, 1]$ at the $n$-th step in the construction of $C_{1/3}$.

\begin{theorem}\label{refinement}
Let \((x,y)\in\mathbb{N}^2\) and consider the self-similar set \(E(x,y)\).

\begin{enumerate}
\item[(a)] If \(x,y\in V\) or \(x,y\in D\), then \(E(x,y)\) is a Cantor set with a non-integer Hausdorff dimension.

\item[(b)] If \(x\in V\) and \(y\in D\), or vice versa, then \(E(x,y)\) is either a finite union of closed intervals or a Cantorval. More precisely:
\begin{enumerate}
\item[(i)] If \(y=2\cdot4^t\cdot x\) for some \(t \in \mathbb{Z}\), then \(E(x,y)\) consists of \(2^\alpha\) closed intervals, where
$$
\alpha=
\left\{
\begin{array}{r}
t, ~ \text{if} ~ ~ t \geq 0,\\
-t-1, ~ \text{if} ~ ~ t < 0.\\

\end{array}
\right.
$$

\item[(ii)] Otherwise, \(E(x,y)\) is a Cantorval.

\end{enumerate}
\end{enumerate}
\end{theorem}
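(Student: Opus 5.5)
The plan is to first reduce to primitive parameters and then treat the two cases separately, with the Cantorval step the hardest. Writing $x=4^a x'$, $y=4^b y'$ with $4\nmid x',y'$ does not help directly. Instead I will use two basic facts: $E(x,y)$ is the attractor of the IFS $\{z\mapsto (z+e)/4 : e\in\{0,x,y,x+y\}\}$, and $E(x,y)=E(x,0)+E(0,y)$, where $E(x,0)=x\,C$ and $C=\{\sum \delta_n 4^{-n}:\delta_n\in\{0,1\}\}$. This writes $E(x,y)$ as an arithmetic sum $xC+yC$ of two scaled copies of the same Cantor set. Scaling gives $E(\lambda x,\lambda y)=\lambda E(x,y)$, so I will divide out the largest common factor of the form $g=\gcd(x,y)$. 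Membership in $V$ or $D$ is governed by the last nonzero quaternary digit, which is preserved under multiplication by $4$. Multiplication by an odd $m$ also preserves membership, because $c(mn)$ depends only on the last nonzero digit times $m \bmod 4$, and $m\cdot 2\equiv 2$, $m\cdot\{1,3\}\subset\{1,3\}$. Multiplication by $2$ swaps $V$ and $D$ only in the appropriate cases. So the type of the pair behaves well and I may assume $\gcd(x,y)$ is a power of $2$, or even $1$ after rescaling both coordinates.

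\textbf{Part (a).} Here the goal is an exact overlap. Suppose $c(x)=c(y)$. I will show there exist distinct finite words with equal sums $\sum_{n\le N}\varepsilon_n4^{-n}$, equivalently integers with $\sum\varepsilon_n 4^{N-n}$ equal. With $x=4^a u$, $y=4^b w$, $u,w$ of the same type, I expect a concrete coincidence. For instance $4^{b}w\cdot x = 4^a u \cdot y$ is realised by the digits of $w$ times $x$ against the digits of $u$ times $y$, but digits must lie in $\{0,1\}$ per coordinate. So I will use $xC+yC$ and look for $\delta,\delta'\in\{0,1\}^N$ with $x\,\delta(4)=y\,\delta'(4)$, where $\delta(4)=\sum\delta_n4^{N-n}$.

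Such a pair exists iff $y/x$ equals a ratio of two numbers whose quaternary digits are $0/1$. The integers with $0/1$ quaternary digits form a set $S$. I expect the key arithmetic lemma to be that $\{p/q: p,q\in S\}$ contains every positive rational whose numerator and denominator have the same type, and none of the other type. The natural attempt is to show that $S\cdot\mathbb{Z}$ multiples hit every residue pattern: any $n\in V$ with odd part divides some element of $S$ with the same $2$-adic behaviour.

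Exact overlaps then give $\dim_H<\log4/\log4=1$ by the standard counting of distinct level-$N$ words (fewer than $4^N$ distinct ones, so the similarity dimension of a sub-IFS of a high iterate drops). Non-integer dimension is then immediate, since $0<\dim<1$; positivity holds because $E\supset xC$ with $\dim C=1/2$. A compact set of dimension $<1$ has empty interior, hence it is totally disconnected. It is perfect since it is self-similar and not a point, so it is a Cantor set.

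\textbf{Part (b).} When the types differ, the same lemma gives no exact overlaps. Since the parameters are integers, the translations lie in a lattice, and by the weak separation/finite type theory for lattice translations, no exact overlap implies OSC and positive measure. This fits Table~\ref{tab}; that table's equivalences are stated later, so I would prove the needed directions directly. Positive measure of a self-similar set gives non-empty interior by Schief. Then the Guthrie–Nymann/Banakiewicz-type trichotomy for achievement sets of multigeometric series (here $xC+yC$ is the achievement set of $x,y,x/4,y/4,\dots$) says the set is a finite union of intervals, a Cantor set, or a Cantorval. Having interior excludes the Cantor option.

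\textbf{Subcase (i).} I will compute directly. For $y=2x$, $\{0,x,2x,3x\}/4$ are complete quaternary digits, so $E=[0,x]\cdot$const, which is one interval. For $y=2\cdot 4^t x$ with $t\ge0$, the digit set is $x\{0,1\}+2\cdot4^t x\{0,1\}$. Grouping the expansion in blocks shows that $E$ is $x$ times a set formed by a union of $2^t$ translates of an interval. This reduces by induction on $t$ via $E(x,4y)=E(x,y)$-type shifting identities. The case $t<0$ follows by symmetry $x\leftrightarrow y$, since $x=2\cdot 4^{-t-1}\cdot(y/2)\cdots$; the index bookkeeping gives $-t-1$.

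\textbf{Subcase (ii).} This is the main obstacle: showing the set is not a finite union of intervals. I will try the criterion that if $E$ were a finite union of intervals, then the rightmost gap structure near $0$ forces $E\cap[0,\epsilon]$ to be an interval. By self-similarity, $E=\bigcup (E+e)/4$ would then have an interval at $0$, that is, $[0,\eta]\subset E$ for some $\eta>0$. Near $0$, the only contributions come from digits $0$ and $\min(x,y)$ at the first positions. I would show that this forces $\max(x,y)/\min(x,y)=2\cdot 4^t$ by comparing the left endpoint structure, namely that the gap after $0$ closes only if the second-smallest digit fits.
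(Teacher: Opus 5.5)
There is a genuine gap in Part (a): the key arithmetic lemma is false, and the way you encode exact overlaps is too narrow. Take $(x,y)=(1,3)$, both in $V$. Multiplying $q\in S$ by $3$ turns every quaternary digit $1$ into $3$ with no carries, so $3q\notin S$ for every $q\in S\setminus\{0\}$. Hence $3\notin\{p/q:p,q\in S\}$, and no coincidence $x\,\delta(4)=y\,\delta'(4)$ exists.

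Yet $\mathcal F_{1,3}$ does have an exact overlap: $f_1\circ f_0=f_0\circ f_4$, both being $z\mapsto z/16+1/4$. In general, two words of length $N$ coincide iff $xA+yB=xA'+yB'$ with $A,A',B,B'\in S$, i.e. $x(A-A')=y(B'-B)$. So the relevant set is $S-S$, the integers with a quaternary expansion using digits in $\{-1,0,1\}$.

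With this correction, the exclusion direction is easy. Every nonzero element of $S-S$ has the form $4^k(4m\pm1)$, so it has even $2$-adic valuation, and any overlap forces $x,y$ to have the same type. Your no-overlap claim in Part (b) needs exactly this, since as stated your lemma only excludes the special overlaps $xA=yB'$. Once fixed, your route to interior in (b) (lattice/finite type, hence OSC, then Schief) is a legitimate alternative to the paper's appeal to Mattila's Theorem~10.5(c).

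The existence direction is the real content. For coprime $p,q$ of the same type you need a common $k\neq 0$ with $pk,qk\in S-S$. This is essentially Kenyon's theorem (Mattila's Theorem~10.5(b)), which the paper imports rather than reproves. Your remark that each odd $n$ divides some element of $S$ does not yield a common multiplier. So Part (a) currently has no proof that $\dim_H E(x,y)<1$.

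Subcase (ii) is only a heuristic. Your reduction is sound: if $E$ were a finite union of intervals, $0=\min E$ would lie in a nondegenerate component $[0,\eta]$. But ``near $0$ only the digits $0$ and $\min(x,y)$ contribute'' is not the mechanism, because at every small scale the $x$-terms and $y$-terms interleave.

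What is missing is the gap criterion for ordered series. For $y>x$, rearrange the bi-geometric series non-increasingly: first $y/4,\dots,y/4^t$, then alternately $y/4^{t+i}$ and $x/4^i$, where $t=\lceil\log_4(y/x)\rceil-1$.
\begin{itemize}
\item If $u_n>U_n$, then $(U_n,u_n)\cap E=\emptyset$, since any subsum smaller than $u_n$ uses only the tail.
\item The tail is $2$-periodic up to the factor $1/4$. At the two positions of a block, $u_n\le U_n$ is equivalent to $y\le 2\cdot4^tx$ and to $y\ge 2\cdot4^tx$ respectively.
\item So unless $y=2\cdot4^tx$, one inequality fails infinitely often, and the gaps $(U_n,u_n)$ accumulate at $0$.
\end{itemize}
This is the Kakeya--Hornich--Menon argument the paper uses for (ii) and for the converse in (i).

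Two smaller points in (i). You should check that the $2^t$ translates of $[0,x]$ are disjoint; they are, because the points of $L_t$ are at least $2x$ apart. For $t<0$ the correct relation is $x=2\cdot4^{-t-1}y$, not the expression involving $y/2$ that you wrote.
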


In particular, the two-dimensional parameter space admits a more detailed geometric decomposition according to the precise topological type of \(E(x,y)\). The parameters for which the attractor is a finite union of intervals form the red ``rays'' visible in Figure~\ref{tilingdet}, while the remaining interior-producing parameters correspond to Cantorvals.

\begin{figure}[H]
\center{\includegraphics[scale=0.4]{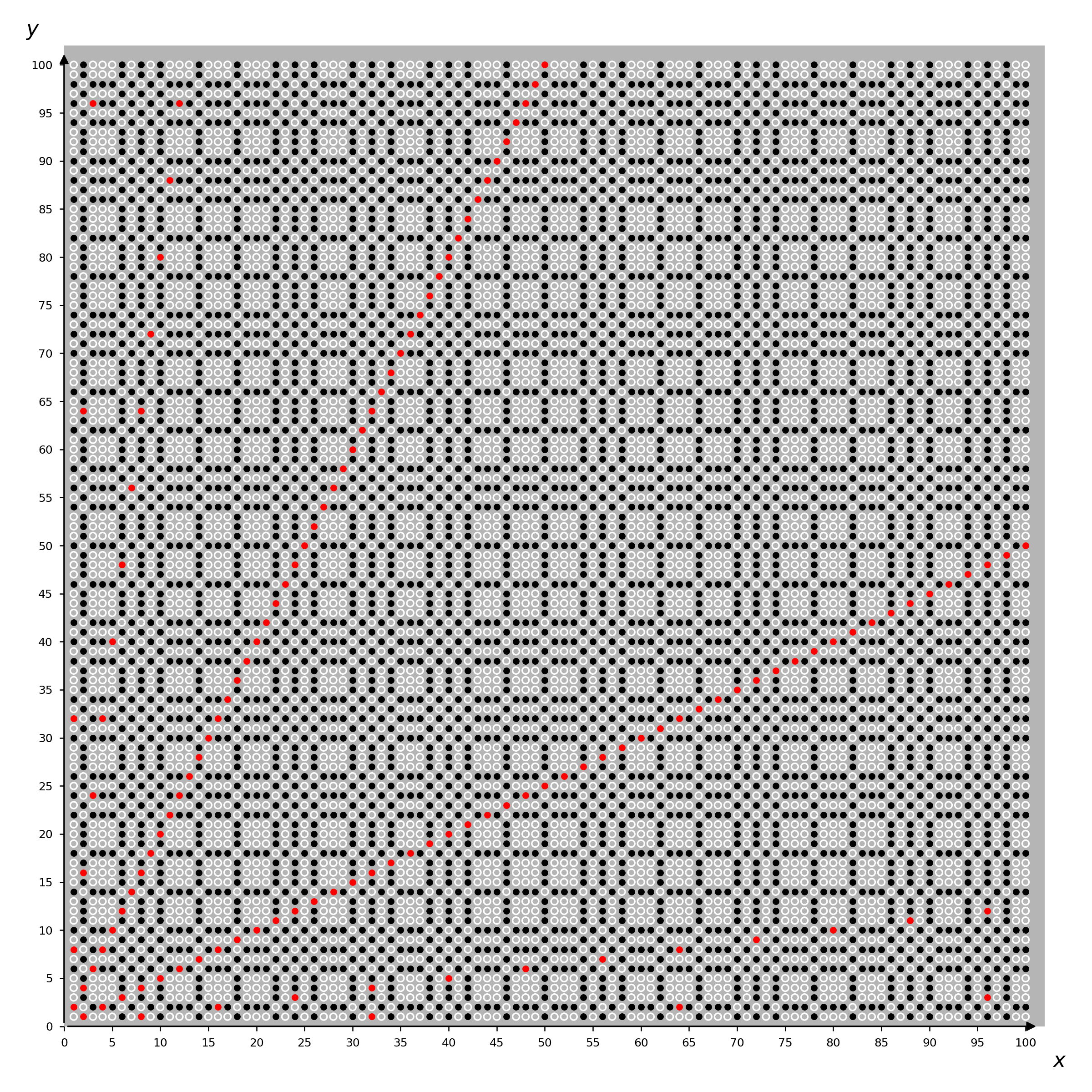}}
\caption{The refined parameter tiling: white corresponds to Cantor sets, black to Cantorvals, and red to finite unions of closed intervals.}
\label{tilingdet}
\end{figure}

\noindent \textbf{Organisation of the paper.} In \S \ref{persp} we describe the three distinct ways in which the sets $E(x,y)$ may be viewed. In \S \ref{proofs} we prove Theorems  \ref{main} and \ref{refinement}. In \S \ref{tilingprops} we study the aperiodic order of the tiling $T$. Finally, in \S \ref{final}, we place our results in a broader context, discussing their motivation, connections with classical parameter spaces, and some final questions and remarks.

\section{Equivalent representations of $E(x,y)$}\label{persp}
In this section we begin by providing three equivalent representations of $E(x,y)$: first as a self-similar set, secondly as an achievement set of a bi-geometric series, and thirdly as a `rational' projection of the four-corner Cantor set.
\subsection{Representation as a self-similar set}

Consider the iterated function system

$$
\mathcal{F}_{x,y}
=
(f_0,f_x,f_y,f_{x+y}),
\qquad
(x,y)\in\mathbb{N}^2,
$$

where

$$
f_d(z)=\frac{z+d}{4},
\qquad
d\in\{0,x,y,x+y\}.
$$
Throughout the paper, statements about the similarity dimension, exact overlaps, and the open set condition for $\mathcal{F}_{x,y}$ refer to this iterated function system (IFS).

For each \((x,y)\in\mathbb{N}^2\), the maps in \(\mathcal{F}_{x,y}\) are similarities of \(\mathbb{R}\) with common contraction ratio \(1/4\). Hence there exists a unique non-empty compact attractor \(E(x,y)\subset\mathbb{R}\) satisfying

$$
E(x,y) = \bigcup_{f \in \mathcal{F}_{x,y}} f(E(x,y)).
$$

The similarity dimension of the iterated function system $\mathcal{F}_{x,y}$ is $1$ for every $(x,y)\in\mathbb{N}^2$. We say that $(x,y)$ is a dimension-drop parameter if $\dim_H E(x,y)<1$.
We say that $\mathcal{F}_{x,y}$ satisfies the open set condition (OSC) if there exists a non-empty open set $O$ such that 
$$f_0(O) \cup f_x(O) \cup f_y(O) \cup f_{x+y}(O) \subset O$$ 
and such that the sets in the union are pairwise disjoint. In the diagonal case $x=y$, we have $f_x=f_y$, so the iterated function system has an exact overlap at the first level and does not satisfy the open set condition.

\subsection{Representation as a set of subsums}
The set
$$E(u_n)=\left\{ \sum_{n=1}^{\infty} \varepsilon_n u_n : (\varepsilon_n) \in \{ 0, 1\}^{\mathbb{N}}\right\}$$
is referred to as the achievement set or the set of subsums of the series $\sum u_n$. 
The study of achievement sets was initiated by Kakeya~\cite{kakeya} and remains an active area of research; see, for example,~\cite{GP25} for a list of open problems.

A fundamental result in this theory is the following trichotomy~\cite{NS}: the achievement set of a convergent positive series is either
\begin{enumerate}
 \item a finite union of closed intervals;
 \item a Cantor set;
 \item a Cantorval.
\end{enumerate}

Determining necessary and sufficient conditions for an achievement set to belong to one of these three classes is difficult in general. Particularly well studied are \emph{multigeometric series} of the form
\begin{equation}
\label{MGS}
\sum_{n=1}^{\infty}{z_n}=k_1q+k_2q+\dots+k_mq+k_1q^2+\dots +k_mq^2+\dots+k_1q^{i}+\dots+k_mq^{i}+\dots,
\end{equation}
where $k_1, k_2, \dots, k_m$ are fixed positive scalars, and $0<q<1$. In particular, conditions for the set of subsums of a multigeometric series to be a Cantorval were established in \cite{Banakh,Jones,Ferdinands,BP17,Bartoszewicz} with further generalisations provided in \cite{KMV}.

We will repeatedly use the following elementary properties of multigeometric series.
 
\begin{lemma}
\label{L1}
If for the series \eqref{MGS} the inequality $z_{j} \geq z_{j+1}$ ($z_j < z_{j+1}$) holds for some $1 \leq j \leq m$, then the inequality $z_{j+im} \geq z_{j+1+im}$ ($z_{j+im} < z_{j+1+im}$) also holds for every $i \in \mathbb{N}$.
\end{lemma}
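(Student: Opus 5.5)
The plan is to read off the indices directly from the block structure of \eqref{MGS}. The terms come in blocks of length $m$: block $i$ (for $i\ge 1$) is $(k_1q^i,\dots,k_mq^i)$. So for $1\le j\le m$ and $i\ge 0$ we have
\[
z_{j+im}=k_jq^{i+1}.
\]
This identifies the general term by its position. One must be careful at the end of a block: $z_{j+1+im}$ is either the next term in the same block or the first term of the next block.

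The main step is a case split on whether $j<m$ or $j=m$.

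\emph{Case $j<m$.} Both indices $j+im$ and $j+1+im$ lie in the same block, so
\[
z_{j+im}=k_jq^{i+1}, \qquad z_{j+1+im}=k_{j+1}q^{i+1}.
\]
Their ratio is $k_j/k_{j+1}$, which does not depend on $i$. In particular, $z_j=k_jq$ and $z_{j+1}=k_{j+1}q$.

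\emph{Case $j=m$.} Here $z_{m+im}=k_mq^{i+1}$ and $z_{m+1+im}=z_{1+(i+1)m}=k_1q^{i+2}$. Their ratio is $k_m/(k_1q)$, again independent of $i$. For $i=0$ these are exactly $z_m$ and $z_{m+1}$.

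In either case there is a positive constant $\lambda=q^{i}$ such that
\[
(z_{j+im},\,z_{j+1+im})=\lambda\,(z_j,\,z_{j+1}).
\]
Multiplying by the positive number $\lambda$ preserves both $\ge$ and $<$, so the stated inequalities transfer from $(z_j,z_{j+1})$ to $(z_{j+im},z_{j+1+im})$ for every $i\in\mathbb{N}$.

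There is no real obstacle here. The only point needing care is the wrap-around case $j=m$, where one must check that the extra factor $q$ appears in both $z_m\to z_{m+1}$ and $z_{m+im}\to z_{m+1+im}$. This is what makes the common scaling factor still $q^i$.
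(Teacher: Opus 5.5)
Your proof is correct and matches what the paper intends; the paper states this lemma without proof as an ``elementary property'', and the underlying fact is exactly your identity $(z_{j+im},z_{j+1+im})=q^{i}(z_j,z_{j+1})$ combined with the fact that multiplying by $q^i>0$ preserves both $\geq$ and $<$. You could drop the case split $j<m$ versus $j=m$ by noting that $z_{n+m}=q\,z_n$ for every $n\in\mathbb{N}$, including across block boundaries, which gives $z_{n+im}=q^i z_n$ directly.
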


\begin{lemma}
\label{L2}
If for the series \eqref{MGS} the relation $z_j \leq Z_{j}=\sum_{n=j+1}^{\infty}z_n$ $(z_{j} > Z_{j})$  holds for some $1 \leq j \leq m$, then the relation $z_{j+im} \leq Z_{j+im}$ $(z_{j+im} > Z_{j+im})$  also holds for every $i \in \mathbb{N}$.
\end{lemma}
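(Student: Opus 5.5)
We prove Lemma~\ref{L2} by exploiting the self-similarity of the tail sums of a multigeometric series; Lemma~\ref{L1} is handled the same way but more simply. The key observation is that the series \eqref{MGS} is invariant, up to the factor $q$, under a shift by $m$ terms. Since $z_{j+m}=k_{j'}q^{s+1}$ whenever $z_j=k_{j'}q^{s}$ (where $j=(s-1)m+j'$ with $1\le j'\le m$), we have
\[
z_{n+m}=q\,z_n \quad\text{for every } n\in\mathbb N .
\]
By induction on $i$ this gives $z_{n+im}=q^{i}z_n$ for all $n$ and all $i\in\mathbb N$.

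For Lemma~\ref{L1}, multiplying the inequality $z_j\ge z_{j+1}$ (respectively $z_j<z_{j+1}$) by the positive number $q^i$ gives
\[
z_{j+im}=q^iz_j\ \ge\ q^iz_{j+1}=z_{j+1+im},
\]
and the strict version follows in the same way. The index $j+1$ may equal $m+1$, which lies in the next block; this causes no trouble, because the identity $z_{n+im}=q^iz_n$ holds for every $n$ and not only within a block.

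For Lemma~\ref{L2} we apply the same identity to the tail. Since the series is convergent with positive terms, we may reindex termwise:
\[
Z_{j+im}=\sum_{n=j+im+1}^{\infty}z_n=\sum_{n=j+1}^{\infty}z_{n+im}=q^i\sum_{n=j+1}^{\infty}z_n=q^iZ_j .
\]
Hence $z_{j+im}=q^iz_j\le q^iZ_j=Z_{j+im}$, and the same computation with strict inequality handles the case $z_j>Z_j$.

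There is no real obstacle here. The only points needing care are that the shift identity holds across block boundaries, and that interchanging the scaling factor with the infinite sum is justified. The latter follows from absolute convergence, since $0<q<1$ and the $k_i$ are fixed positive scalars.
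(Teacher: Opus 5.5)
Your proof is correct. The shift identity $z_{n+im}=q^iz_n$ gives $z_{j+im}=q^iz_j$ and $Z_{j+im}=q^iZ_j$, which transfers both $z_j\le Z_j$ and $z_j>Z_j$ to every index $j+im$. The paper gives no proof of this lemma: it records it, together with Lemma~\ref{L1}, as an elementary property of multigeometric series, and your scaling argument is the standard justification it implicitly relies on. One cosmetic point: reindexing the tail is a pure index shift, so it needs only convergence of the series, not absolute convergence.
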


For each $(x,y) \in \N^2$ we have
$$E(  x, y)=\left\{ \sum_{i=1}^{\infty} \frac{\varepsilon_i}{4^{i}} : (\varepsilon_i) \in \{0,  x, y,   x+y\}^{\mathbb{N}}\right\}.$$
In particular, this makes it the set of subsums of the bi-geometric series

\begin{equation*}
\sum_{n=1}^{\infty}a_n=\frac{y}{4} + \frac{x}{4} +\frac{y}{4^2}+\frac{x}{4^2}+ \cdots + \frac{y}{4^i}+\frac{x}{4^i}+ \cdots,
\end{equation*}
which is a particular case of the series \eqref{MGS} with $m=2, k_1=y, k_2=x, q = 1/4$.

The trichotomy above therefore implies that, for every $(x,y)\in\mathbb N^2$, the set $E(x,y)$ is either a finite union of closed intervals, a Cantor set, or a Cantorval. Our aim is to determine explicitly which of these three possibilities occurs for each pair of parameters.

\subsection{Representation as a projection of the four-corner Cantor set}

The set $E(x,y)$ also admits a natural geometric representation as
a linear projection of the four-corner Cantor set. Let
\[
C_{1/4}
=
\left\{
\sum_{n=1}^{\infty}\frac{3\varepsilon_n}{4^n}:
\varepsilon_n\in\{0,1\}
\right\}
\]
and consider the four-corner Cantor set
\[
C=C_{1/4}\times C_{1/4}.
\]
For $\tau>0$, define the linear map
\[
\pi_\tau(u,v)=\tau u+v.
\]
Then
\[
\pi_\tau\left(\frac{1}{3}C\right)
=
\left\{
\sum_{n=1}^{\infty}
\frac{\tau\varepsilon_n+\eta_n}{4^n}:
\varepsilon_n,\eta_n\in\{0,1\}
\right\}.
\]
Since
\[
\tau\varepsilon_n+\eta_n\in\{0,1,\tau,\tau+1\},
\]
we obtain
\[
\pi_\tau\left(\frac{1}{3}C\right)
=
\left\{
\sum_{n=1}^{\infty}\frac{\delta_n}{4^n}:
\delta_n\in\{0,1,\tau,\tau+1\}
\right\}.
\]

Taking $\tau=x/y$, we arrive at the identity
\begin{equation}
\label{eq:E_projection}
E(x,y)
=
y\pi_{x/y}\left(\frac{1}{3}C\right).
\end{equation}
Indeed,
\[
y\pi_{x/y}\left(\frac{1}{3}C\right)
=
\left\{
\sum_{n=1}^{\infty}
\frac{x\varepsilon_n+y\eta_n}{4^n}:
\varepsilon_n,\eta_n\in\{0,1\}
\right\},
\]
and the possible digits $x\varepsilon_n+y\eta_n$ are precisely
\[
0,\quad x,\quad y,\quad x+y.
\]

This representation can also be expressed in terms of orthogonal
projections. Let $P_\tau$ denote the scalar orthogonal projection onto
the direction of the vector $(\tau,1)$, that is,
\[
P_\tau(u,v)=\frac{\tau u+v}{\sqrt{1+\tau^2}}.
\]
Since
\[
\pi_\tau=\sqrt{1+\tau^2}\,P_\tau,
\]
identity \eqref{eq:E_projection} can be rewritten as
\[
E(x,y)
=
\sqrt{x^2+y^2}\,
P_{x/y}\left(\frac{1}{3}C\right).
\]
Thus, $E(x,y)$ is a homothetic image of an orthogonal projection of
the four-corner Cantor set in the rational direction determined by
the vector $(x,y)$.

This places our problem within the classical theory of projections of the four-corner Cantor set. In particular, Theorem \ref{refinement} may be viewed as a refinement of \cite[Theorem~10.5]{Mattila}. 
\section{Topological classification of $E(x,y)$} \label{proofs}
We begin by justifying Table \ref{tab} by showing that there are several equivalent ways to characterise the tiling $T$.

\begin{proposition}[Equivalent characterisations]
\label{tableprop}
For $(x,y)\in\mathbb{N}^2$, the following are equivalent:
\begin{enumerate}
    \item $T(x,y)=1$;
    \item $E(x,y)$ has non-empty interior;
    \item $E(x,y)$ has positive Lebesgue measure;
    \item $\dim_H E(x,y)=1$;
    \item the semigroup generated by $\mathcal{F}_{x,y}$ is free;
    \item $\mathcal{F}_{x,y}$ satisfies the open set condition.
\end{enumerate}
\end{proposition}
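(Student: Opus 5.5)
The plan is to establish a cycle of implications among (2)--(6) using standard facts about self-similar sets with integer data, and then attach (1) through Theorem~\ref{main}. Theorem~\ref{main} is proved later in this section; its proof does not depend on the present proposition. Throughout I use two features of $\mathcal{F}_{x,y}$: the contraction ratio is $1/4$, the reciprocal of an integer, and the digit set $\{0,x,y,x+y\}\subset\mathbb{Z}$ has exactly four elements (with multiplicity), so the similarity dimension is $1$. Write $\Delta_n=\{\sum_{i=1}^n \delta_i 4^{n-i}:\delta_i\in\{0,x,y,x+y\}\}$ for the level-$n$ digit sums.

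\textbf{Step 1: (5)$\Leftrightarrow$(3), via counting at level $n$.} All maps in the semigroup have the form $z\mapsto 4^{-n}(z+a)$ with $a\in\Delta_n$. Freeness therefore means that the $4^n$ words of length $n$ give distinct elements of $\Delta_n$ for every $n$.

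If the semigroup is free, then $E(x,y)$ is the attractor of a digit system with $\#\Delta_n=4^n$. By the Kenyon / Lagarias--Wang theory of integral self-affine tiles, this gives $\mathcal{L}(E(x,y))>0$. Concretely, $E=4^{-n}(E+\Delta_n)$, and the translates $E+\Delta_n$ overlap only in measure zero when the digit sums are distinct.

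Conversely, suppose an exact overlap occurs at some level $k$. Then $\#\Delta_{k}\le 4^{k}-1$. Since $\Delta_{nk}$ is built from $n$ blocks of $\Delta_k$, we get $\#\Delta_{nk}\le(4^k-1)^n$. Covering $E(x,y)$ by the $\#\Delta_{nk}$ intervals of length $\asymp 4^{-nk}$ then gives
\[
\dim_H E(x,y)\le \overline{\dim}_B E(x,y)\le \frac{\log(4^k-1)}{k\log 4}<1 .
\]
This also proves (4)$\Rightarrow$(5) and, since zero dimension deficit is needed for positive measure, (3)$\Rightarrow$(5).

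\textbf{Step 2: (3)$\Leftrightarrow$(6)$\Rightarrow$(2).} Because the similarity dimension equals $1$, Schief's theorem says that the OSC is equivalent to $\mathcal{H}^1(E(x,y))>0$, i.e.\ to positive Lebesgue measure. For (3)$\Rightarrow$(2), I invoke the Lagarias--Wang result that a self-affine attractor in $\mathbb{R}$ with expanding integer factor and positive measure is a self-affine tile. Such a tile is the closure of its interior, so $E(x,y)$ has non-empty interior.

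\textbf{Step 3: the trivial implications and (1).} The implications (2)$\Rightarrow$(3) and (3)$\Rightarrow$(4) are immediate. Combined with Steps~1 and~2, this closes the cycle
\[
(2)\Rightarrow(3)\Rightarrow(4)\Rightarrow(5)\Rightarrow(3)\Leftrightarrow(6)\quad\text{and}\quad(3)\Rightarrow(2).
\]
Finally, (1)$\Leftrightarrow$(2) is exactly Theorem~\ref{main}.

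The main obstacle is this last equivalence, i.e.\ Theorem~\ref{main} itself: it requires an arithmetic identification of when $\Delta_n$ has repeated elements in terms of the vile/dopey dichotomy. For that I would first try to show that an exact overlap is an identity $\sum(\varepsilon_i-\varepsilon_i')4^{-i}x=\sum(\eta_i'-\eta_i)4^{-i}y$ with coefficients in $\{-1,0,1\}$. I would then compare $4$-adic valuations and last nonzero quaternary digits of $x$ and $y$, using the achievement-set Lemmas~\ref{L1} and~\ref{L2} for the converse direction.
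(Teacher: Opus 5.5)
Your proof is correct, but it is organised differently from the paper's. The paper reads off (1)$\Leftrightarrow$(2)$\Leftrightarrow$(3)$\Leftrightarrow$(4) directly from Theorem~\ref{refinement}. Its two halves (Theorems~\ref{thm:empty_interior} and~\ref{thm:nonempty_interior}) import Mattila's classification of rational projections of the four-corner set, including the measure-zero and dimension-drop statements. The paper then attaches (5) via Hochman's theorem for algebraic parameters and (6) via Schief. You instead prove (2)--(6) equivalent using only the integrality of the data:
distinct level-$n$ digit sums give positive measure by the Kenyon/Lagarias--Wang tile criterion;
an exact overlap gives $\overline{\dim}_B E(x,y)<1$ by your counting bound;
positive measure gives interior by Lagarias--Wang;
and Schief handles (6).
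So the arithmetic of $V$ and $D$ enters only through (1)$\Leftrightarrow$(2), i.e.\ Theorem~\ref{main}, whose proof is indeed independent of this proposition. Your route replaces Hochman's theorem, which is far heavier than needed here, by an elementary lattice argument with a stronger conclusion (free $\Rightarrow$ positive measure, not merely full dimension). It also shows that (2)--(6) are equivalent for every IFS $z\mapsto(z+d)/4$ with four integer digits, independently of $V$ and $D$. The paper's route is shorter because Mattila's theorem already delivers the measure and dimension statements.

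Two small remarks. First, the ``concretely'' sentence in your Step~1 is not an argument: translates of a null set trivially overlap in measure zero, so the weight rests entirely on the cited criterion. A self-contained substitute: write $f_u(z)=4^{-n}(z+a_u)$ for a word $u$ of length $n$. For $v$ of the same length, $f_u^{-1}\circ f_v$ is translation by the integer $a_v-a_u$, so distinct digit sums keep these maps uniformly away from the identity. The Bandt--Graf criterion then gives $\mathcal{L}^1(E(x,y))>0$, and Schief's theorem (which for similarity dimension $1$ also yields non-empty interior) does the rest.

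Second, your closing sketch is unnecessary, since Theorem~\ref{main} is available. Your valuation idea does give one half at once: a nonzero $\sum a_i4^i$ with $a_i\in\{-1,0,1\}$ is $4^k$ times an odd number, so an exact overlap $xA=yB$ forces $\nu_2(x)\equiv\nu_2(y)\pmod 2$. The other half, that $x,y$ in the same class really do produce an exact overlap (equivalently, empty interior), is the genuinely hard input supplied by Mattila's theorem. Lemmas~\ref{L1} and~\ref{L2} offer no evident route to it.
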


\begin{proof}
The equivalence of (1)-(4) follow directly from Theorem \ref{refinement}. The equivalence to (5) is because the exact overlaps conjecture holds for IFSs with algebraic parameters, by \cite{Hochman}. Finally, to see (6), note that by Schief~\cite{Schief}, a self-similar set with similarity dimension $s=1$ has positive Lebesgue measure if and only if the OSC is satisfied.
\end{proof}

\subsection{Parameter reductions}
There are several reductions that will be useful.
From now on, \(A\sim B\) means that the sets \(A\) and \(B\) either both have empty interior or both have non-empty interior.

\begin{lemma}

\label{T:Divisibility of one parameter}
For all $x, y \in \mathbb{N}$, the following relations hold:
\begin{enumerate}[label=\textup{(\roman*)}]
\item $E(x,y) \sim E(y,x)$;
\item $E(x,y) \sim E(\eta x, \eta y)$ for any $\eta \in \mathbb{R} \setminus \{0\}$;
\item $E(x,y) \sim E_{\nu}(x,y) = \left\{ \sum_{i=1}^{\infty} \frac{\varepsilon_i}{4^{i}} : (\varepsilon_i) \in \{\nu, x+\nu, y+\nu,   x+y+\nu\}^{\mathbb{N}}\right\}$ for any $\nu \in \mathbb{R}$;
\item $E(x,y) \sim E(4^lx, 4^my)$ for any $l, m \in \mathbb{N}$.
\end{enumerate}

\end{lemma}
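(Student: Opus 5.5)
Items (i)--(iii) follow directly from the definitions, since each is a set identity or an affine image, and affine homeomorphisms of $\mathbb{R}$ preserve having non-empty interior.

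For (i), the digit set $\{0,x,y,x+y\}$ is symmetric in $x$ and $y$, so $E(x,y)=E(y,x)$ as sets.

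For (ii), each sum satisfies $\sum \eta\varepsilon_i 4^{-i}=\eta\sum\varepsilon_i 4^{-i}$, so $E(\eta x,\eta y)=\eta E(x,y)$. This is the image of $E(x,y)$ under the homeomorphism $z\mapsto \eta z$ with $\eta\neq0$. (Here $E(\cdot,\cdot)$ is read with the same formula for real parameters.)

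For (iii), every digit is shifted by $\nu$, so
\[
E_\nu(x,y)=E(x,y)+\nu\sum_{i\ge1}4^{-i}=E(x,y)+\tfrac{\nu}{3},
\]
which is a translate of $E(x,y)$.

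For (iv), by (i) and induction it is enough to show $E(x,y)\sim E(4x,y)$. Write $A=\{\sum_i a_i4^{-i}: a_i\in\{0,x\}\}$ and $B=\{\sum_i b_i4^{-i}:b_i\in\{0,y\}\}$. Then $E(x,y)=A+B$, since the digit $\varepsilon_i=a_i+b_i$ splits uniquely. Similarly $E(4x,y)=4A+B$. Shifting the indices gives
\[
4A=\{a_1 x\cdot 1\}+ A,\qquad a_1\in\{0,1\},
\]
that is, $4A=A\cup(A+x)$. Hence
\[
E(4x,y)=(A+B)\cup(A+B+x)=E(x,y)\cup\bigl(E(x,y)+x\bigr).
\]
This is a union of two translates of $E(x,y)$.

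If $E(x,y)$ has non-empty interior, then so does $E(4x,y)$, because it contains $E(x,y)$.

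For the converse, suppose $E(4x,y)=F\cup(F+x)$ has non-empty interior, where $F=E(x,y)$ is compact. A finite union of closed sets with empty interior has empty interior, by the Baire category theorem or directly. So at least one of $F$ or $F+x$ has non-empty interior, and therefore $F$ does.

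The main subtlety is only the identity $4A=A\cup(A+x)$, which is immediate from $4\sum_{i\ge1}a_i4^{-i}=a_1+\sum_{i\ge1}a_{i+1}4^{-i}$. Iterating this identity, or applying it alternately in each coordinate via (i), yields $E(x,y)\sim E(4^lx,4^my)$.
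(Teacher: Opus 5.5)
Your proof is correct and follows essentially the paper's route. Parts (i)--(iii) are the same set identities. For (iv), the paper rearranges the series to write $E(4^l x,y)$ at once as a union of $2^l$ translates of $E(x,y)$, whereas you prove the one-step identity $E(4x,y)=E(x,y)\cup(E(x,y)+x)$ and iterate; both finish with the same Baire-category argument. One small wording slip: the splitting $\varepsilon_i=a_i+b_i$ is not unique when $x=y$, but only its existence is needed for $E(x,y)=A+B$.
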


\begin{proof}

Since the set of subsums remains unchanged under any rearrangement of the series terms, \(E(x,y)=E(y,x)\), which proves part~\textup{(i)}. This also implies that the parameter tiling is symmetric with respect to the line \(y=x\).

Since \(E(\eta x,\eta y)=\eta E(x,y),\) the set \(E(\eta x,\eta y)\) is a scaled copy of \(E(x,y)\). As every non-zero scaling preserves the presence or absence of interior, part~\textup{(ii)} follows. Therefore, if \(x\) and \(y\) are not coprime and \(d=\gcd(x,y)\), then 
\(
E(x,y)
=
dE\left(\frac{x}{d},\frac{y}{d}\right),
\)
and hence
\(
E(x,y)
\sim
E\left(\frac{x}{d},\frac{y}{d}\right).
\)
Thus, we may restrict our attention to the case in which \(x\) and \(y\) are coprime.

Every digit
\(
\varepsilon_i\in\{\nu,x+\nu,y+\nu,x+y+\nu\}
\)
can be written as
\(
\varepsilon_i=\delta_i+\nu,
\)
where
\(
\delta_i\in\{0,x,y,x+y\}.
\)
Therefore,
\[
\begin{aligned}
E_{\nu}(x,y)
&=
\left\{
\sum_{i=1}^{\infty}
\frac{\delta_i+\nu}{4^i}:
(\delta_i)\in\{0,x,y,x+y\}^{\mathbb N}
\right\}
=
E(x,y)
+
\nu\sum_{i=1}^{\infty}\frac{1}{4^i}
=
E(x,y)+\frac{\nu}{3}.
\end{aligned}
\]
Thus, \(E_{\nu}(x,y)\) is an isometric copy of \(E(x,y)\) with translation vector \(\nu/3\). Hence,
\(
E(x,y)\sim E_{\nu}(x,y),
\)
which proves part~\textup{(iii)}.

To prove (iv), let us consider the bi-geometric series with initial parameters $4^{l}x$ and $y$, where $l \in \mathbb{N}$.
We have
$$\sum_{n=1}^{\infty}{a_n}=\frac{y}{4}+\frac{4^l x}{4}+\frac{y}{4^2}+\frac{4^l x}{4^2}+\dots+\frac{y}{4^l}+\frac{4^l x}{4^l}+\dots +\frac{y}{4^i}+ \frac{4^l x}{4^i}+\dots=$$
$$=4^{l-1}x+4^{l-2}x+ \dots +4 x + x +\frac{y}{4}+\frac{x}{4}+ \dots +\frac{y}{4^i}+\frac{x}{4^i}+\dots,$$
where the terms in the second line have been rearranged and reindexed.
We observe that the set of subsums of $\sum_{n=l+1}^{\infty}{a_n}$ coincides with $E(x, y)$. Therefore, the set $E(4^{l}x,y)$ can be represented as the
arithmetic (Minkowski) sum $E(4^{l}x,y) = L \oplus E(x,y)$, where
$$L=\left\{ \sum_{n=1}^{l} \varepsilon_n 4^{n-1}x : (\varepsilon_n) \in \{0, 1 \}^{l} \right\}$$
is a finite set of points. Equivalently,
\[
E(4^lx,y)
=
\bigcup_{\lambda\in L}\bigl(\lambda+E(x,y)\bigr).
\]

Since \(E(x,y)\) is compact, all the sets in this finite union are closed. By the Baire category theorem, \(E(4^lx,y)\) has non-empty interior if and only if at least one of these translates has non-empty interior. Since translations preserve the presence or absence of interior, we obtain
\(
E(4^lx,y)\sim E(x,y).
\)

Analogously, we can show that $E(x, 4^my) \sim E(x,y)$ for any $m \in \mathbb{N}$.
By combining these two cases, we conclude that $E(4^{l}x,4^m y) \sim E(x,y)$.

\end{proof}

\subsection{The empty-interior case}

For a positive integer $n$, let $n^*\in\{1,2,3\}$ denote the last
non-zero digit in its quaternary expansion. Recall \cite[Theorem~10.5(b)]{Mattila}, which states that, for coprime
positive integers $p$ and $q$,
\[
\mathcal L^1\left(\pi_{p/q}(C)\right)=0
\qquad\text{and}\qquad
\dim_{\mathrm H}\pi_{p/q}(C)<1
\]
whenever both $p^*$ and $q^*$ are odd.

\begin{theorem}
\label{thm:empty_interior}
If $x$ and $y$ belong to the same class, either both to $V$ or both
to $D$, then
\[
\operatorname{int}E(x,y)=\varnothing.
\]
Moreover,
\[
\mathcal L^1(E(x,y))=0
\qquad\text{and}\qquad
\dim_{\mathrm H}E(x,y)<1.
\]
Consequently, $E(x,y)$ is a Cantor set.
\end{theorem}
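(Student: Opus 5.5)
Using the reductions of Lemma~\ref{T:Divisibility of one parameter}, the plan is to bring the pair $(x,y)$ to a coprime pair with both last non-zero quaternary digits odd. Then \cite[Theorem~10.5(b)]{Mattila} applies through the projection identity \eqref{eq:E_projection}.

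\textbf{Reduction.} Write $x=4^{a}x'$ and $y=4^{b}y'$ with $x',y'$ not divisible by $4$. Then $x'^*=x^*$ and $y'^*=y^*$, and $x',y'\equiv 1,2,3 \pmod 4$.

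Membership in $V$ or $D$ depends only on $x^*$. If $x,y\in V$, then $x',y'$ are odd. If $x,y\in D$, then $x'\equiv y'\equiv 2\pmod 4$, so $x'=2x''$ and $y'=2y''$ with $x'',y''$ odd.

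I will not use only the interior relation $\sim$. Instead I will track the sets themselves, since measure and dimension are also needed.

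\textbf{Key identities.} First, $E(4^lx,y)=L\oplus E(x,y)$ with $L$ finite, as in the proof of part (iv). This set is a finite union of translates of $E(x,y)$. Hence it is null, or of dimension $<1$, exactly when $E(x,y)$ is.

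Second, scaling gives $E(\eta x,\eta y)=\eta E(x,y)$. So we may remove factors of $4$ from each coordinate, then a common factor $2$ in the $D$ case, and then $d=\gcd(x'',y'')$. Since $d$ is odd, dividing by it keeps the last non-zero digits odd: for odd $n$, $n^*=n \bmod 4$, and the quotient of two odd numbers is odd.

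We arrive at a coprime pair $(p,q)$ of odd numbers. Then $p^*,q^*\in\{1,3\}$, and $E(p,q)=q\,\pi_{p/q}(\tfrac13C)$ is a scaled copy of $\pi_{p/q}(C)$. Mattila's theorem then gives $\mathcal L^1=0$ and $\dim_H<1$, and these transfer back to $E(x,y)$.

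\textbf{Conclusion.} Zero Lebesgue measure gives empty interior. For the Cantor set claim, I will invoke the trichotomy of \cite{NS}: $E(x,y)$ is a finite union of intervals, a Cantor set, or a Cantorval. The first and last options have positive measure, so $E(x,y)$ must be a Cantor set.

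The non-integer dimension claimed in Theorem~\ref{refinement}(a) needs $\dim_H>0$. This follows since $E(x,y)$ contains a copy of the self-similar set built from the digits $\{0,x\}$, which has dimension $1/2$.

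The main obstacle is checking that the reductions preserve the hypothesis of Mattila's theorem, namely that the last non-zero digit stays odd under division by $4$ and by odd common divisors. Both are elementary.
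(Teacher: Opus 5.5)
Your proof is correct and ends where the paper's does: a coprime pair $(p,q)$ with $p^*,q^*$ odd, then \cite[Theorem~10.5(b)]{Mattila} via \eqref{eq:E_projection}, then the trichotomy of \cite{NS}. The difference is in how you reduce to that pair.

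The paper divides once by $g=\gcd(x,y)$ and uses the fact that $n\in V$ if and only if $\nu_2(n)$ is even. Since $\nu_2(x)-\nu_2(y)=\nu_2(p)-\nu_2(q)$, the quotients $p,q$ lie in a common class, and coprimality forces that class to be $V$. The whole reduction is therefore a single homothety, $E(x,y)=\frac{y}{3}\pi_{p/q}(C)$, in the original projection direction $p/q=x/y$.

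You instead strip powers of $4$ from each coordinate separately. This changes the ratio $x/y$ by a power of $4$, so you need the Minkowski-sum identity $E(4^lx,y)=L\oplus E(x,y)$. You also need the fact that finite unions of translates preserve nullity and Hausdorff dimension. Both steps are valid, but the route is longer. The stronger normal form you obtain ($p,q$ both odd) is not required, since Mattila's hypothesis only concerns the last non-zero quaternary digits; for instance $p=4$ is allowed.

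In your favour, you observe that $E(x,y)\supseteq x\cdot\{\sum_n \varepsilon_n4^{-n}:\varepsilon_n\in\{0,1\}\}$, which gives $\dim_{\mathrm H}E(x,y)\ge 1/2$. That supplies the lower bound needed for the ``non-integer dimension'' claim in Theorem~\ref{refinement}(a), which the paper's proof leaves implicit.
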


\begin{proof}
Let
\[
g=\gcd(x,y),
\qquad
p=\frac{x}{g},
\qquad
q=\frac{y}{g}.
\]
Then $\gcd(p,q)=1$ and $x/y=p/q$.

Let $\nu_2(n)$ denote the exponent of $2$ in the prime factorisation
of $n$. By the definitions of $V$ and $D$,
\[
n\in V
\quad\Longleftrightarrow\quad
\nu_2(n)\ \text{is even},
\]
whereas
\[
n\in D
\quad\Longleftrightarrow\quad
\nu_2(n)\ \text{is odd}.
\]
Since
\[
\nu_2(x)=\nu_2(g)+\nu_2(p),
\qquad
\nu_2(y)=\nu_2(g)+\nu_2(q),
\]
division by $g$ preserves the relation of belonging to the same class.
Thus, $p$ and $q$ belong to the same class. Since they are coprime,
they cannot both belong to $D$, and hence $p^*$ and $q^*$ are both odd.

By \cite[Theorem~10.5(b)]{Mattila},
\[
\mathcal L^1\left(\pi_{p/q}(C)\right)=0
\qquad\text{and}\qquad
\dim_{\mathrm H}\pi_{p/q}(C)<1.
\]
The projection representation \eqref{eq:E_projection} gives
\[
E(x,y)
=
\frac{y}{3}\pi_{p/q}(C).
\]
Since non-zero homotheties preserve zero Lebesgue measure and
Hausdorff dimension, it follows that
\[
\mathcal L^1(E(x,y))=0
\qquad\text{and}\qquad
\dim_{\mathrm H}E(x,y)<1.
\]
Therefore, $E(x,y)$ has empty interior. Finally, the topological
trichotomy for achievement sets implies that $E(x,y)$ is a Cantor set.
\end{proof}

\subsection{The non-empty interior case}

We now use the complementary part of
\cite[Theorem~10.5(c)]{Mattila}. It states that, for coprime positive
integers $p$ and $q$, the projection $\pi_{p/q}(C)$ contains a
non-degenerate interval whenever either $p^*$ or $q^*$ is even.
Moreover,
\[
\pi_{p/q}(C)
=
\overline{\operatorname{int}\pi_{p/q}(C)}.
\]

\begin{theorem}
\label{thm:nonempty_interior}
If one of $x,y$ belongs to $V$ and the other belongs to $D$, then
$E(x,y)$ contains a non-degenerate interval. In particular,
\[
\operatorname{int}E(x,y)\neq\varnothing
\]
and
\[
E(x,y)=\overline{\operatorname{int}E(x,y)}.
\]
Consequently, $E(x,y)$ is either a finite union of closed intervals
or a Cantorval.
\end{theorem}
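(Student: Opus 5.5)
The proof will mirror that of Theorem~\ref{thm:empty_interior}. First I reduce to coprime parameters and then invoke \cite[Theorem~10.5(c)]{Mattila} through the projection identity \eqref{eq:E_projection}.

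Set $g=\gcd(x,y)$, $p=x/g$, $q=y/g$. As in the proof of Theorem~\ref{thm:empty_interior}, $n\in V$ if and only if $\nu_2(n)$ is even. We have $\nu_2(x)=\nu_2(g)+\nu_2(p)$ and $\nu_2(y)=\nu_2(g)+\nu_2(q)$, and $x,y$ lie in different classes. Hence $\nu_2(p)$ and $\nu_2(q)$ have different parity, so exactly one of $p,q$ lies in $D$. For $n\in D$ the last non-zero quaternary digit is $n^*=2$. Therefore one of $p^*,q^*$ is even, and the hypothesis of \cite[Theorem~10.5(c)]{Mattila} holds for the coprime pair $(p,q)$.

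That theorem then says two things. The set $\pi_{p/q}(C)$ contains a non-degenerate interval, and $\pi_{p/q}(C)=\overline{\operatorname{int}\pi_{p/q}(C)}$. Since $x/y=p/q$, the identity \eqref{eq:E_projection} gives $E(x,y)=\frac{y}{3}\pi_{p/q}(C)$. This is the image of $\pi_{p/q}(C)$ under the homeomorphism $z\mapsto \frac{y}{3}z$ of $\mathbb R$, which maps non-degenerate intervals to non-degenerate intervals. It also commutes with taking interiors and closures. So $E(x,y)$ contains a non-degenerate interval, $\operatorname{int}E(x,y)\neq\varnothing$, and $E(x,y)=\overline{\operatorname{int}E(x,y)}$.

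Finally, $E(x,y)$ is the achievement set of a convergent positive bi-geometric series, so the trichotomy of \cite{NS} applies. A Cantor set has empty interior, and that case is now excluded. Hence $E(x,y)$ is a finite union of closed intervals or a Cantorval. There is no real obstacle here. The only point needing care is the parity bookkeeping showing that coprimality forces exactly one of $p^*,q^*$ to equal $2$. This works because coprime $p,q$ cannot both be even, and their $2$-adic valuations differ in parity.
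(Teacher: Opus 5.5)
Your proposal is correct and matches the paper's proof essentially step for step: reduce to the coprime pair $(p,q)$, use the parity of $\nu_2$ to show that $p$ and $q$ lie in different classes, so exactly one of $p^*,q^*$ equals $2$, then apply Mattila's Theorem~10.5(c) via $E(x,y)=\frac{y}{3}\pi_{p/q}(C)$, and finish with the trichotomy. Your observation is also slightly cleaner than the paper's: the differing parity of $\nu_2(p)$ and $\nu_2(q)$ alone already gives ``exactly one of $p^*,q^*$ is even,'' so the appeal to coprimality at that step is not needed.
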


\begin{proof}
Let
\[
g=\gcd(x,y),
\qquad
p=\frac{x}{g},
\qquad
q=\frac{y}{g}.
\]
Then $\gcd(p,q)=1$ and $x/y=p/q$.

As in the proof of Theorem~\ref{thm:empty_interior}, division by the
common factor $g$ preserves whether the two parameters belong to the
same or to different classes $V$ and $D$. Hence, $p$ and $q$ belong
to different classes.

Since $p$ and $q$ are coprime, at least one of them is odd. Therefore,
exactly one of the last non-zero quaternary digits $p^*$ and $q^*$ is
even. By \cite[Theorem~10.5(c)]{Mattila}, the projection
$\pi_{p/q}(C)$ contains a non-degenerate interval and satisfies
\[
\pi_{p/q}(C)
=
\overline{\operatorname{int}\pi_{p/q}(C)}.
\]

By the projection representation \eqref{eq:E_projection},
\[
E(x,y)
=
\frac{y}{3}\pi_{p/q}(C).
\]
Since a non-zero homothety preserves the presence of intervals and
commutes with taking the interior and its closure, we conclude that
$E(x,y)$ contains a non-degenerate interval and
\[
E(x,y)=\overline{\operatorname{int}E(x,y)}.
\]

Finally, the topological trichotomy for achievement sets implies that
$E(x,y)$ is either a finite union of closed intervals or a Cantorval.
\end{proof}

\subsection{Classification of $E(x,y)$ which is a finite union of intervals}

For a convergent positive series $\sum u_n$ and $k \in \mathbb{N}$, we define its $k$-tail as
$$U_{k}:=\sum^{\infty}_{n=k+1} u_n.$$
By $E(u_n)$ we denote the set of subsums of the series. We say that a series is \textbf{ordered} if its terms form a non-increasing sequence, i.e., $u_n \geq u_{n+1}$ for every $n \in \mathbb{N}$.

For ordered series, classical results of Kakeya, Hornich, and Menon \cite{kakeya, Hornich, Menon} provide a necessary and sufficient condition for the set of subsums to be a finite union of closed intervals, as well as a sufficient condition for it to be homeomorphic to the Cantor set.

\begin{theorem} (Kakeya--Hornich--Menon)
\label{KHM}

For an ordered convergent positive series, $E(u_n)$ is:
\begin{enumerate}
\item\label{thm:KHM.3} a closed interval if and only if $u_n \leq U_n$ for all $n \in \mathbb{N}$;

\item\label{thm:KHM.2} a finite union of closed intervals if and only if $u_{n} \leq U_{n}$ holds for all sufficiently large $n$;

\item\label{thm:KHM.4} homeomorphic to the Cantor set if $u_{n} > U_n$ holds for all sufficiently large $n$.
\end{enumerate}
\end{theorem}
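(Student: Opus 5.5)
We prove the three parts of Theorem~\ref{KHM} in the order (1), (3), (2). Throughout we write $u_n\ge u_{n+1}$, $U_n\to 0$, and $E_n$ for the set of subsums of the tail $\sum_{j>n}u_j$. The basic identity is
\[
E_{n-1}=E_n\cup(u_n+E_n),
\]
together with the facts that $E_n\subset[0,U_n]$ and that $0,U_n\in E_n$.

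For part (1), sufficiency goes as follows. Suppose $u_n\le U_n$ for all $n$, and take $t\in[0,U_0]$. We build a subsum greedily: include $u_n$ whenever this keeps the running partial sum at most $t$. We then show by induction that the gap $t-s_n$ never exceeds $U_n$. If $u_n$ is rejected, the gap is less than $u_n\le U_n$. If $u_n$ is accepted, the gap drops from at most $U_{n-1}$ by $u_n$, giving at most $U_n$. Since $U_n\to0$, the subsum equals $t$. For necessity, suppose $u_n>U_n$ for some $n$. Every subsum either omits all of $u_1,\dots,u_n$ and is at most $\sum_{j<n}u_j+U_n$, or contains some $u_j$ with $j\le n$. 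Using monotonicity we show that the open interval $\bigl(\sum_{j<n}u_j+U_n,\ \sum_{j<n}u_j+u_n\bigr)$ is missed. Neither endpoint sum can land in it, because any subsum containing $u_n$ but omitting $u_j$ for some $j<n$ is compensated by ordering. More cleanly, apply the greedy analysis in reverse: $E_{n-1}=E_n\cup(u_n+E_n)$ has a gap, since $\max E_n=U_n<u_n=\min(u_n+E_n)$. Hence $E_{n-1}$ is disconnected, and so is $E=\bigcup_{F}\bigl(\sum_{F}u_j+E_{n-1}\bigr)$. This last step requires checking that the finitely many translates do not fill the gap; that follows because the gap $(U_n,u_n)$ sits just below $u_n$ and any subsum in it would need a term at least $u_n$ among indices below $n$, and such a term overshoots.

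For part (3), if $u_n>U_n$ for all $n\ge N$, then each $E_{n-1}$ for $n>N$ is the disjoint union of two translated copies of $E_n$, separated by a gap. Iterating, $E_N$ is the image of $\{0,1\}^{\mathbb N}$ under an injective continuous map, so it is a Cantor set. Then $E$ is a finite union of translates of $E_N$, indexed by the subsets of $\{1,\dots,N\}$. A finite union of Cantor sets in $\mathbb R$ is compact, perfect, and totally disconnected, and therefore is itself a Cantor set. Totally disconnected holds because a finite union of closed nowhere dense sets is nowhere dense.

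For part (2), if $u_n\le U_n$ for all $n\ge N$, then part (1) applied to the tail gives that $E_N$ is an interval. Consequently $E$ is a union of $2^N$ translates of it, which is a finite union of closed intervals. Conversely, suppose $u_n>U_n$ for infinitely many $n$. We aim to show $E$ has infinitely many components. The gaps $(U_n,u_n)$ inside $E_{n-1}$ propagate to gaps in $E$ at positions $\sum_{j<n}u_j+U_n$, which lie in arbitrarily fine scales. We expect this bookkeeping step, ensuring the gaps are not covered by other translates, to be the main obstacle; it uses ordering exactly as in the necessity argument of part (1). Infinitely many distinct gaps rule out a finite union of intervals.
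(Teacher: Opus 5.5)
The paper does not prove this statement; it quotes it from Kakeya, Hornich and Menon, so your argument has to stand on its own. Your overall plan is sound, and three pieces are correct as written: the greedy expansion for sufficiency in (1), the splitting $E_{n-1}=E_n\cup(u_n+E_n)$ with Brouwer's characterisation for (3), and the reduction of sufficiency in (2) to (1) via translates of $E_N$. The one genuinely unfinished step is the converse of (2), which you leave as ``bookkeeping'' and call the main obstacle. Your necessity argument in (1) is also muddled on the way there.

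In (1), your first attempt does not work as stated. Knowing that a subsum contains some $u_j$ with $j\le n$ gives no lower bound of the form $\sum_{j<n}u_j+u_n$. That interval is in fact a gap, but only because it is the image of $(U_n,u_n)$ under the symmetry $s\mapsto U_0-s$ of $E$ (complement the index set). The detour through $E_{n-1}$ and its translates is also unnecessary.

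What you need is the direct statement that your final sentence in (1) essentially contains: if $u_n>U_n$, then $(U_n,u_n)\cap E=\varnothing$.
\begin{itemize}
\item A subsum containing some $u_j$ with $j\le n$ is at least $u_j\ge u_n$, by monotonicity. Note the index range is $j\le n$, not $j<n$.
\item A subsum avoiding all such indices is a subsum of the tail, so it is at most $U_n$.
\end{itemize}
Since $U_n$ and $u_n$ are themselves subsums, $(U_n,u_n)$ is a bounded component of $\mathbb{R}\setminus E$, and necessity in (1) follows at once.

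For (2), suppose $u_n>U_n$ for infinitely many $n$. For two such indices $n<m$ we have $u_m\le U_n$, because $u_m$ is a term of the $n$-tail. So $(U_m,u_m)$ and $(U_n,u_n)$ are disjoint. Hence $\mathbb{R}\setminus E$ has infinitely many bounded components, accumulating at $0$. A finite union of $k$ closed intervals has at most $k+1$ complementary components, so $E$ is not such a union. No bookkeeping over translates is needed, and with this your proof is complete.
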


As we mentioned in Section 2, $E(x,y)$ coincides with the set $E(a_n)$ of subsums of the bi-geometric series
\begin{equation}
\label{SBGS}
\sum_{n=1}^{\infty}a_n=\frac{y}{4} + \frac{x}{4} +\frac{y}{4^2}+\frac{x}{4^2}+ \cdots + \frac{y}{4^i}+\frac{x}{4^i}+ \cdots,
\end{equation}
which is a special case of multigeometric series \eqref{MGS}.

Since \(E(x,y)=E(y,x)\), we may assume without loss of generality that \(y\geq x\). If $x=y$, then $x$ and $y$ belong to the same set, either $V$ or $D$. Hence, by Theorem~\ref{thm:empty_interior}, $E(x,y)$ is a Cantor set and therefore cannot be a finite union of closed intervals. Thus, in what follows, we may assume that $y>x$. The case where $x>y$ is symmetric with respect to the line $y=x$ in the coordinate plane.

In order to apply Theorem \ref{KHM}, we must first solve the problem of ordering the terms of the series \eqref{SBGS}.

\begin{lemma}
\label{OST}
Let \(y>x\), and define
\begin{equation}
\label{Definition of t}
t:=\left\lceil \log_{4} \frac{y}{x} \right\rceil - 1.
\end{equation}
Then \(t\in\mathbb{N}_0\), and the non-increasing rearrangement of the series \eqref{SBGS} is given by
\begin{equation}
\label{Ordered Series}
\sum_{n=1}^{\infty}a'_n
=
\sum_{j=1}^{t}\frac{y}{4^j}
+
\sum_{i=1}^{\infty}\left(\frac{y}{4^{t+i}}+\frac{x}{4^i}\right),
\end{equation}
where the first sum is understood to be empty when \(t=0\).
\end{lemma}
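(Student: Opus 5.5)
Since \(y>x\geq 1\), we have \(y/x>1\), so \(\log_4(y/x)>0\) and \(\lceil\log_4(y/x)\rceil\geq 1\); hence \(t\geq 0\). The ceiling also gives the defining inequality
\[
4^{t} < \frac{y}{x} \leq 4^{t+1},
\qquad\text{equivalently}\qquad
\frac{x}{4^i}\cdot 4^{t} < \frac{y}{4^{i}}\cdot\frac{1}{1}\leq \frac{x}{4^i}\cdot 4^{t+1},
\]
i.e.
\[
\frac{y}{4^{t+i+1}} \leq \frac{x}{4^{i}} < \frac{y}{4^{t+i}}\qquad\text{for every } i\in\mathbb N.
\]
The whole argument rests on this sandwich: it places every \(x\)-term between two consecutive \(y\)-terms.

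The plan is as follows. The multiset of terms of \eqref{SBGS} is \(\{y/4^j: j\geq1\}\cup\{x/4^i:i\geq1\}\), and the series \eqref{Ordered Series} lists exactly these terms once each: \(y/4^j\) appears for \(j\le t\) in the first sum and for \(j=t+i\), \(i\geq1\), in the second, and each \(x/4^i\) appears once. So \eqref{Ordered Series} is a rearrangement of \eqref{SBGS}, and it remains to check that it is non-increasing. Writing the sequence out as
\[
\frac{y}{4},\dots,\frac{y}{4^t},\ \frac{y}{4^{t+1}},\frac{x}{4},\ \frac{y}{4^{t+2}},\frac{x}{4^2},\ \dots,
\]
there are four kinds of consecutive pairs to compare:
\begin{itemize}
\item[(a)] \(y/4^j\ge y/4^{j+1}\) inside the first block, which is trivial;
\item[(b)] the junction \(y/4^{t}\ge y/4^{t+1}\), which is trivial (and absent when \(t=0\));
\item[(c)] \(y/4^{t+i}\ge x/4^i\), which is the right half of the sandwich (strictly);
\item[(d)] \(x/4^i\ge y/4^{t+i+1}\), which is the left half of the sandwich.
\end{itemize}

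Once the ordering is fixed, Lemma~\ref{L1} could be cited to propagate the comparisons (c) and (d) from the first period to all later periods, since the second sum is itself a bi-geometric series with \(k_1=y/4^t\), \(k_2=x\). This is not strictly needed, because the sandwich already holds for every \(i\).

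The only delicate point is the boundary case \(y/x=4^{t+1}\). There \(x/4^i=y/4^{t+i+1}\), so the ordering is non-strict, but it is still valid for a non-increasing rearrangement. I will therefore be careful that the ceiling yields a strict inequality on the left and a non-strict one on the right, and not the other way round. The convention that the first sum is empty when \(t=0\) is consistent with this: \(t=0\) corresponds exactly to \(1<y/x\le4\).
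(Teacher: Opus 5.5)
Your proof is correct and follows essentially the same route as the paper's. Both derive $4^t < y/x \le 4^{t+1}$ from the ceiling, deduce $y/4^{t+i+1} \le x/4^i < y/4^{t+i}$ for every $i$, and read off the ordering by comparing consecutive terms. Your check that the displayed series lists each term exactly once, and your remark on the tie case $y/x = 4^{t+1}$, are minor details that the paper leaves implicit.
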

\begin{proof}
Since \(y/x>1\), we have \(t\in\mathbb{N}_0\). Moreover, the definition of \(t\) gives
\[
t<\log_4\frac{y}{x}\leq t+1,
\]
and hence
\[
4^t<\frac{y}{x}\leq 4^{t+1}.
\]
Consequently, for every \(i\in\mathbb{N}\),
\[
\frac{y}{4^{t+i}}
>
\frac{x}{4^i}
\geq
\frac{y}{4^{t+i+1}}.
\]
When \(t\geq1\), the terms
\[
\frac{y}{4},\ldots,\frac{y}{4^t}
\]
form an initial decreasing block; when \(t=0\), this block is empty. In either case, the remaining terms alternate in non-increasing order as
\[
\frac{y}{4^{t+1}},\frac{x}{4},
\frac{y}{4^{t+2}},\frac{x}{4^2},\ldots.
\]
This proves the stated representation.
\end{proof}

\begin{theorem}
\label{FUIT}
$E(x, y)$ is a finite union of intervals if and only if $y=2 \cdot 4^{t} \cdot x$ for some $t \in \mathbb{Z}$.
\end{theorem}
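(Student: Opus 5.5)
The plan is to reduce everything to the Kakeya--Hornich--Menon criterion, Theorem~\ref{KHM}(\ref{thm:KHM.2}), applied to the non-increasing rearrangement from Lemma~\ref{OST}. Rearranging does not change the set of subsums, so $E(x,y)$ is also the achievement set of the ordered series $\sum a'_n$. As discussed before Lemma~\ref{OST}, the diagonal $x=y$ is excluded and $E(x,y)=E(y,x)$. So I assume $y>x$ and prove that $E(x,y)$ is a finite union of intervals if and only if $y=2\cdot 4^t x$, where $t\ge 0$ is the integer defined in \eqref{Definition of t}. The case $x>y$ then follows by symmetry. Writing $x=2\cdot 4^{s}y$ with $s\geq 0$ as $y=2\cdot4^{-s-1}x$ covers exactly the negative exponents in the statement.

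First I isolate the periodic part. The initial block $\sum_{j=1}^t y/4^j$ is finite, so it is irrelevant for the condition ``$u_n\le U_n$ for all sufficiently large $n$''. The remaining terms
\[
\frac{y}{4^{t+1}},\ \frac{x}{4},\ \frac{y}{4^{t+2}},\ \frac{x}{4^2},\ \dots
\]
form a bi-geometric series of type \eqref{MGS} with $m=2$, $k_1=y/4^t$, $k_2=x$ and $q=1/4$. The tails $U_n$ of the full ordered series, for $n>t$, coincide with the tails of this shifted series. By Lemma~\ref{L2}, the relation $z_j\le Z_j$, or its negation $z_j>Z_j$, propagates along each residue class modulo $2$. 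Hence $u_n\le U_n$ holds for all large $n$ if and only if it holds for the first two terms of the periodic part. If either of these two inequalities fails, it fails for infinitely many $n$, and Theorem~\ref{KHM}(\ref{thm:KHM.2}) rules out a finite union of intervals.

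Next I compute the two tails explicitly. Put $r=y/(4^t x)$; by the proof of Lemma~\ref{OST}, $r\in(1,4]$. The first periodic term is $xr/4$, and its tail is
\[
\frac{x}{4}+\frac{1}{3}\Bigl(\frac{xr}{4}+\frac{x}{4}\Bigr)=\frac{x}{3}+\frac{xr}{12}.
\]
So the first inequality reads $xr/4\le x/3+xr/12$, which is equivalent to $r\le 2$. The second periodic term is $x/4$, and its tail is $\frac{1}{3}\bigl(\frac{xr}{4}+\frac{x}{4}\bigr)=\frac{x(r+1)}{12}$. So the second inequality is equivalent to $r\ge 2$. Both hold if and only if $r=2$, that is, $y=2\cdot4^t x$.

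Conversely, suppose $y=2\cdot 4^s x$ with $s\ge0$. Then $\lceil\log_4(2\cdot4^s)\rceil-1=s$, so the $t$ of Lemma~\ref{OST} equals $s$ and $r=2$, closing the equivalence. The main obstacle is the bookkeeping in the middle step. One has to check that Lemma~\ref{L2} legitimately applies to the shifted series, whose first coefficient $y/4^t$ need not be an integer; this is harmless, since \eqref{MGS} only requires positive scalars. One also has to check that the tails used there really are tails of the full ordered series, and that no ordering ties break the rearrangement. The boundary case $r=4$ gives $x/4^i=y/4^{t+i+1}$, which is a tie; it is still non-increasing and does not affect either inequality.
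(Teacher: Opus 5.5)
Your proposal is correct and follows essentially the same route as the paper. You pass to the ordered rearrangement of Lemma~\ref{OST}, use Lemma~\ref{L2} to reduce the Kakeya--Hornich--Menon criterion to the two inequalities for the first two terms of the bi-geometric tail (giving $y\le 2\cdot 4^t x$ and $y\ge 2\cdot 4^t x$), and finish by symmetry. Your explicit check that $y=2\cdot 4^s x$ forces the $t$ of Lemma~\ref{OST} to equal $s$ makes the converse direction rigorous, whereas the paper leaves it implicit.
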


\begin{proof}

Note that the ordered series \eqref{Ordered Series} is bi-geometric starting from the $(t+1)$-th term, i.e., it admits the form
$$\sum_{n=t+1}^{\infty} a_n'=k_1q+k_2q+k_1q^2+k_2q^2+\dots+k_1 q^i+k_2q^i+\dots,$$
with parameters $k_1=y/4^t, k_2=x$ and $q=1/4$. The Kakeya-Hornich-Menon criterion for being a finite union of closed intervals depends only on the inequalities \(a_n\leq A_n\) for all sufficiently large \(n\). Therefore, it is sufficient to analyse the bi-geometric tail.

Taking into account Lemma \ref{L2} and Theorem \ref{KHM}, $E(a'_n)$ (and accordingly $E(a_n)$) will be a finite union of intervals if and only if

$$
\left\{
\begin{array}{l}
a_{t+1}' \leq A_{t+1}'=a'_{t+2}+a'_{t+3}+a'_{t+4}+\dots, \\
a_{t+2}' \leq A_{t+2}'=a'_{t+3}+a'_{t+4}+\dots.\\
\end{array}
\right.
$$
The latter system can be rewritten as
$$
\left\{
\begin{array}{l}
\frac{y}{4^{t+1}} \leq \frac{x}{4}+\frac{y}{4^{t+2}}+\frac{x}{4^2}+\frac{y}{4^{t+3}}+\dots, \\
\frac{x}{4} \leq \frac{y}{4^{t+2}}+\frac{x}{4^2}+\frac{y}{4^{t+3}}+\frac{x}{4^3}+\dots,\\
\end{array}
\right.
$$
from which it follows that

\[
\left\{
\begin{array}{l}
\frac{y}{4^{t+1}} \leq \sum_{i=1}^{\infty} {\frac{x}{4^{i}}}+\sum_{i=1}^{\infty} {\frac{y}{4^{t+1+i}}}, \\
\frac{x}{4} \leq \sum_{i=1}^{\infty} {\frac{x}{4^{i+1}}}+\sum_{i=1}^{\infty} {\frac{y}{4^{t+1+i}}},\\
\end{array}
\right.
\Leftrightarrow
\left\{
\begin{array}{l}
y \leq 2 \cdot 4^t \cdot x, \\
y \geq 2 \cdot 4^t \cdot x,\\
\end{array}
\right.
\]
This system of inequalities has a solution only if $y=2 \cdot 4^{t} \cdot x$  for some $t \in \mathbb{N}_0$. Adding the symmetric solution $y=x / (2 \cdot 4^{t})$ for some $t \in \mathbb{N}_0$ completes the proof of the theorem.

\end{proof}

\begin{corollary}
$E(x, y)$ is the interval $\left[0, \frac{x+y}{3}\right]$ if and only if $y=2x$ or $y=x/2$.
\end{corollary}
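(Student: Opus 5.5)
We prove the corollary directly from part~(\ref{thm:KHM.3}) of Theorem~\ref{KHM}, applied to the ordered series \eqref{Ordered Series} of Lemma~\ref{OST}, together with the observation that $E(x,y)$ always lies in $[0,\frac{x+y}{3}]$. First, every subsum lies between $0$ and the full sum $\sum_{i\ge1}\frac{x+y}{4^i}=\frac{x+y}{3}$, and both endpoints are attained, by the zero digit sequence and by $\varepsilon_i\equiv x+y$. Hence $E(x,y)$ is the interval $[0,\frac{x+y}{3}]$ if and only if it is a single closed interval. By the symmetry $E(x,y)=E(y,x)$ it suffices to treat $y\ge x$. The case $x=y$ is excluded by Theorem~\ref{thm:empty_interior}, and it is also not of the form $y=2x$. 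So we may assume $y>x$, and Lemma~\ref{OST} applies with $t=\lceil\log_4(y/x)\rceil-1$.

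\emph{Sufficiency.} If $y=2x$, then $t=0$ and the ordered series is $\sum_{i\ge1}\left(\frac{2x}{4^{i}}+\frac{x}{4^i}\right)$. This is the case $t=0$ of the computation in the proof of Theorem~\ref{FUIT}: with $y=2\cdot4^0x$, both inequalities $a'_1\le A'_1$ and $a'_2\le A'_2$ hold, in fact with equality. By Lemma~\ref{L2} they then hold for every $n$, so part~(\ref{thm:KHM.3}) of Theorem~\ref{KHM} shows that $E(x,2x)$ is a single interval. The case $y=x/2$ follows by symmetry.

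\emph{Necessity.} Suppose $E(x,y)$ is an interval with $y>x$. It is then in particular a finite union of intervals, so Theorem~\ref{FUIT} gives $y=2\cdot4^tx$ with $t\ge0$; this value of $t$ agrees with the $t$ of Lemma~\ref{OST}, since $4^t<2\cdot4^t\le4^{t+1}$. Suppose $t\ge1$. The ordered series then begins with $a'_1=y/4$, and part~(\ref{thm:KHM.3}) of Theorem~\ref{KHM} requires $a'_1\le A'_1$. Since the total sum is $\frac{x+y}{3}$, we have $A'_1=\frac{x+y}{3}-\frac{y}{4}$, so the condition reads $\frac{y}{4}\le\frac{x+y}{3}-\frac y4$, that is, $y\le 2x$. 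This contradicts $y=2\cdot4^tx\ge 8x$. Hence $t=0$ and $y=2x$.

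The main point needing care is confirming that the first-term inequality alone suffices to rule out $t\ge1$. It does, since Theorem~\ref{KHM}(\ref{thm:KHM.3}) requires the inequality at $n=1$ and the computation of $A'_1$ is exact. Combining the two directions with the symmetric case $y<x$ gives the stated equivalence.
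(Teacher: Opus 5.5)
Your proof is correct and follows essentially the paper's argument: part (1) of Theorem~\ref{KHM} applied to the ordered rearrangement, with the first-term inequality $a'_1\le A'_1$ (equivalently $y\le 2x$) ruling out $t\ge 1$, and the two equalities $a'_1=A'_1$, $a'_2=A'_2$ propagated by Lemma~\ref{L2} settling the case $y=2x$. The differences are minor. For necessity you cite Theorem~\ref{FUIT} instead of redoing the $t=0$ computation inside the paper's split $y/4>x$ versus $y/4\le x$. You also spell out that the interval must be $[0,\frac{x+y}{3}]$ and why $x=y$ is excluded, both of which the paper leaves implicit.
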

\begin{proof}

First of all, note that under the condition $\frac{y}{4}>x$ the initial series \eqref{SBGS} is not ordered. Moreover, for the corresponding ordered series \eqref{Ordered Series}, the inequality $a'_i > A'_i$ will hold for every $1 \leq i \leq t$. Taking into account Theorem \ref{KHM} \eqref{thm:KHM.3}, $E(x, y)$ is not an interval.

In the case $\frac{y}{4} \leq x$, the series \eqref{SBGS} is an ordered bi-geometric series from the very beginning. According to Theorem \ref{KHM} \eqref{thm:KHM.3}, for $E(x, y)$ to be an interval, it is necessary and sufficient that the condition

\[
\left\{
\begin{array}{l}
a_1 \leq A_1, \\
a_2 \leq A_2,\\
\end{array}
\right.
\Leftrightarrow
\left\{
\begin{array}{l}
\frac{y}{4} \leq \frac{x}{4}+\frac{y}{4^2}+\frac{x}{4^2}+\frac{y}{4^3}+\frac{x}{4^3}+\dots, \\
\frac{x}{4} \leq \frac{y}{4^2}+\frac{x}{4^2}+\frac{y}{4^3}+\frac{x}{4^3}+\dots,\\
\end{array}
\right.
\Leftrightarrow
\left\{
\begin{array}{l}
y \leq 2x, \\
y \geq 2x,\\
\end{array}
\right.
\]
holds. This system of inequalities has a solution if and only if $y=2x$. Adding the symmetric solution $x=2y$ completes the proof of the corollary.

\end{proof}

\begin{corollary}
If $y=2 \cdot 4^{t} \cdot x$ for some $t \in \mathbb{Z}$, then $E(x,y)$ consists of exactly $2^{\alpha}$ pairwise disjoint closed intervals, where
$$
\alpha=
\left\{
\begin{array}{r}
t, ~ \text{if} ~ ~ t \geq 0,\\
-t-1, ~ \text{if} ~ ~ t < 0.\\

\end{array}
\right.
$$
\end{corollary}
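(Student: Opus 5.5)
By symmetry $E(x,y)=E(y,x)$, so I will treat $t\ge 0$, i.e. $y=2\cdot 4^t x$. The case $t<0$ will then follow by swapping. Writing $y=2\cdot4^{s}x'$ with $x'=y$, $y'=x$, we get $x = y'/(2\cdot 4^{-t-1}\cdot 4)$, hence $x=2\cdot 4^{-t-1}\cdot y$, i.e. $s=-t-1\ge 0$. This accounts for the formula $\alpha=-t-1$.

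With $y=2\cdot4^tx$, Lemma \ref{OST} applies. Indeed $y/x=2\cdot4^t$ lies in $(4^t,4^{t+1}]$, so the $t$ there agrees with ours. The ordered series is therefore
\[
\sum_{j=1}^{t}\frac{2\cdot 4^t x}{4^j}+\sum_{i=1}^\infty\Bigl(\frac{2x}{4^{i}}+\frac{x}{4^i}\Bigr).
\]
The tail $\sum_{i\ge1}(2x/4^i+x/4^i)$ is the series for $E(x,2x)$. By the preceding corollary its set of subsums is the single interval $I=[0,x]$, since $(x+2x)/3=x$. The first $t$ terms are $2x\cdot 4^{t-j}$ for $j=1,\dots,t$, i.e. $2x,8x,\dots,2x\cdot4^{t-1}$.

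It follows that
\[
E(x,y)=L\oplus[0,x],\qquad L=\Bigl\{2x\sum_{k=0}^{t-1}\varepsilon_k4^k:\varepsilon_k\in\{0,1\}\Bigr\}.
\]
The key step is to show that the $2^t$ translates $\lambda+[0,x]$, $\lambda\in L$, are pairwise disjoint. Since the numbers $\sum_k\varepsilon_k 4^k$ with binary digits are distinct non-negative integers, any two distinct points of $L$ differ by at least $2x$. Hence the gaps between consecutive translates have length at least $x>0$, the translates are disjoint, and they are exactly $2^t$ components. This gives $\alpha=t$.

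For $t<0$ I apply this to $E(y,x)$, with $x=2\cdot4^{-t-1}y$, and obtain $2^{-t-1}$ intervals. The main point to verify carefully is the disjointness: I need the minimal spacing $2x$ between elements of $L$ to strictly exceed the interval length $x$, which the integer-digit argument above gives directly. I also need to check that the $t=0$ boundary case, $y=2x$, indeed yields one interval, consistent with the previous corollary.
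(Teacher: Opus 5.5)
Your proof is correct and follows essentially the same route as the paper: split off the first $t$ terms to get $E(x,y)=L\oplus E(x,2x)=L\oplus[0,x]$, use the $2x$-spacing of the $2^t$ points of $L$ for disjointness, and reduce $t<0$ to this case by swapping $x$ and $y$ via $x=2\cdot4^{-t-1}y$. One slip to fix: the intermediate expression $x=y'/(2\cdot4^{-t-1}\cdot4)$ in your first paragraph is garbled (it should come from $x=y/(2\cdot4^{t})$), although the conclusion you draw from it is right.
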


\begin{proof}
First suppose that $y=2 \cdot 4^t \cdot x$ with $t\geq0$. Separating the
first $t$ terms involving $y$ in the bi-geometric series, we obtain
\[
E(x,y)=L_t\oplus E(x,2x),\qquad
L_t=\left\{2x\sum_{j=0}^{t-1}\varepsilon_j4^j:
\varepsilon_j\in\{0,1\}\right\},
\]
where $L_0=\{0\}$. By the preceding corollary,
$E(x,2x)=[0,x]$. The set $L_t$ contains exactly $2^t$ points,
and any two distinct points of $L_t$ are at least $2x$ apart.
Hence the intervals $\lambda+[0,x]$, $\lambda\in L_t$,
are pairwise disjoint.

If $t<0$, write $\alpha=-t-1\geq0$. Then $x=2\cdot4^\alpha \cdot y$,
so the same argument applies after interchanging $x$ and $y$.
The number of intervals is therefore $2^\alpha=2^{-t-1}$.
\end{proof}

\section{Properties of the tiling} \label{tilingprops}

In this section we establish the arithmetic and dynamical properties of the
parameter tiling. We first study the characteristic sequence of the
partition $\mathbb{N}=V\sqcup D$, and then use its properties to derive
corresponding properties of the tiling.

\subsection{Properties of the characteristic sequence}

Recall that the characteristic function
\[
c\colon\mathbb{N}\longrightarrow\{0,1\}
\]
of the set \(V\) is defined by
\[
c(n)=
\begin{cases}
1, & \text{if }n\in V,\\
0, & \text{if }n\in D.
\end{cases}
\]
Setting \(c_n:=c(n)\), we call the sequence $(c_n)_{n=1}^{\infty}$ the characteristic sequence of \(V\).

We will use the following arithmetic properties of $V$ and $D$ to study $(c_n)$.

\begin{lemma}
\label{Properties_V}
For every $n\in\mathbb{N}$,
\begin{enumerate}[label=(\roman*)]
    \item $n\in V$ if and only if $2n\in D$;
    \item $n\in V$ if and only if $4n\in V$;
    \item $n\in D$ if and only if $4n\in D$;
    \item every odd positive integer belongs to $V$;
    \item $4n-2\in D$.
\end{enumerate}
\end{lemma}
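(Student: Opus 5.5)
The plan is to reduce every item to a statement about the quaternary expansion of $n$. Write $n = 4^k m$ with $4 \nmid m$, so that the last non-zero quaternary digit is $n^* = m \bmod 4 \in \{1,2,3\}$. By definition, $n\in V$ exactly when $n^*\in\{1,3\}$, and $n\in D$ exactly when $n^*=2$. The one fact everything rests on is that multiplying by $4$ shifts the quaternary expansion one place to the left and appends a $0$. Hence $(4n)^* = n^*$, since $4n = 4^{k+1} m$ with the same $m$.

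Items (ii) and (iii) follow at once from $(4n)^*=n^*$, because membership in $V$ or $D$ depends only on $n^*$.

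Item (iv) is equally direct. If $n$ is odd, then $4\nmid n$, so $k=0$ and $n^* = n \bmod 4 \in\{1,3\}$, which puts $n\in V$.

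For (v), note that $4n-2 \equiv 2 \pmod 4$. So the last quaternary digit of $4n-2$ is already non-zero and equals $2$, and therefore $4n-2\in D$.

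Item (i) needs the most care, because doubling does not simply shift quaternary digits. I would handle it by cases on $m \bmod 4$, using $2n = 4^k(2m)$.
\begin{itemize}
\item If $m\equiv 1$ or $3 \pmod 4$, then $2m\equiv 2\pmod 4$. Thus $4\nmid 2m$ and $(2n)^*=2$, so $n\in V$ implies $2n\in D$.
\item If $m\equiv 2\pmod 4$, write $m=2m'$ with $m'$ odd. Then $2n = 4^{k+1}m'$ and $(2n)^* = m' \bmod 4\in\{1,3\}$. So $n\in D$ implies $2n\in V$, that is, $2n\notin D$.
\end{itemize}
Together these give $n\in V \iff 2n\in D$.

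As a cross-check, one can also derive all five items from the characterisation $n\in V \iff \nu_2(n)$ even, which already appears in the proof of Theorem~\ref{thm:empty_interior}. Under this description, (i) holds because $\nu_2(2n)=\nu_2(n)+1$, (ii) and (iii) because $\nu_2(4n)=\nu_2(n)+2$, (iv) because $\nu_2=0$ for odd numbers, and (v) because $\nu_2(4n-2)=1$. Justifying that characterisation is exactly the case analysis on $m$ above, so it is the only step needing any thought, and it is elementary.
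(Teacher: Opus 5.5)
Your proposal is correct and follows essentially the same route as the paper. Both write $n=4^k m$ with $4\nmid m$, do a case split on $m \bmod 4$ for (i), note that multiplication by $4$ preserves the last non-zero quaternary digit for (ii)--(iii), and use residues mod $4$ for (iv)--(v).
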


\begin{proof}
Write
\[
n=4^k m,
\]
where \(k\in\mathbb{N}_0\) and \(m\) is not divisible by \(4\). The last non-zero digit in the quaternary expansion of \(n\) is determined by the residue of \(m\) modulo \(4\). Thus,
\[
n\in V
\quad\Longleftrightarrow\quad
m\equiv1\ \text{or}\ 3\pmod 4,
\]
whereas
\[
n\in D
\quad\Longleftrightarrow\quad
m\equiv2\pmod 4.
\]
If \(m\) is odd, then the last non-zero quaternary digit of \(2n\) is \(2\), and hence \(2n\in D\). If \(m\equiv2\pmod4\), write \(m=2r\), where \(r\) is odd. Then
\[
2n=4^{k+1}r,
\]
so \(2n\in V\). This proves statement~(i). Multiplication by \(4\) leaves the last non-zero quaternary digit unchanged, which proves statements~(ii) and~(iii).

Every odd positive integer is congruent to either \(1\) or \(3\) modulo \(4\), proving statement~(iv). Finally,
\[
4n-2\equiv2\pmod4,
\]
and hence \(4n-2\in D\), proving statement~(v).
\end{proof}

\begin{corollary}
\label{cor:characteristic-recursion}
For any $n \in \N$, the characteristic sequence satisfies
\[
c_{4n-3}=1,\qquad
c_{4n-2}=0,\qquad
c_{4n-1}=1,\qquad
c_{4n}=c_n.
\]
\end{corollary}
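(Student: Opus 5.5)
The plan is to read off each of the four identities from Lemma~\ref{Properties_V}, one residue class modulo $4$ at a time. Since $c$ is the indicator of $V$, each identity is just a membership statement for $V$ or $D$.

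For $c_{4n-3}=1$ and $c_{4n-1}=1$: the integers $4n-3$ and $4n-1$ are odd, so Lemma~\ref{Properties_V}(iv) puts both in $V$. For $c_{4n-2}=0$: Lemma~\ref{Properties_V}(v) gives $4n-2\in D$ directly.

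For $c_{4n}=c_n$, split according to the class of $n$. If $n\in V$, then $4n\in V$ by Lemma~\ref{Properties_V}(ii), so $c_{4n}=1=c_n$. If $n\in D$, then $4n\in D$ by Lemma~\ref{Properties_V}(iii), so $c_{4n}=0=c_n$. Because $V$ and $D$ partition $\mathbb N$, these two cases cover every $n$.

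The step needing most care is this last one, where it is essential that (ii) and (iii) are both equivalences, so the case split over the partition is exhaustive. Beyond that, there is no real obstacle: every step is a direct citation of Lemma~\ref{Properties_V}.
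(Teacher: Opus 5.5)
Your proof is correct and follows the paper's argument exactly: the first three identities come from parts (iv) and (v) of Lemma~\ref{Properties_V}, and $c_{4n}=c_n$ comes from parts (ii) and (iii). One minor point about your closing remark: it is not essential that both (ii) and (iii) be equivalences, since the case split is exhaustive because $V\sqcup D=\mathbb{N}$, and only the forward implications of (ii) and (iii) are used (alternatively, the equivalence in (ii) alone already gives $c_{4n}=c_n$).
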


\begin{proof}
The first three identities follow from parts~(iv) and~(v) of
Lemma~\ref{Properties_V}. The final identity follows from
parts~(ii) and~(iii).
\end{proof}

By the previous corollary, the sequence \((c_n)\) can be constructed recursively as follows. We start with the periodic pattern
\[
1\ 0\ 1\ ?\ 
1\ 0\ 1\ ?\ 
1\ 0\ 1\ ?\ 
1\ 0\ 1\ ?\
1\ 0\ 1\ ?\
1\ 0\ 1\ ?\ \cdots
\]
and fill the successive placeholders \(?\) with the terms \(c_1,c_2,c_3,\ldots\). This yields
\[
1\ 0\ 1\ 1\ 
1\ 0\ 1\ 0\ 
1\ 0\ 1\ 1\ 
1\ 0\ 1\ 1\ 
1\ 0\ 1\ 1\ 
1\ 0\ 1\ 0\ \cdots
\]
This recursive procedure is a Toeplitz construction: at each stage, the
positions not yet determined are filled recursively by the original
sequence.

\begin{lemma}
The characteristic sequence \((c_n)\) is a fixed point of the quaternary substitution
\[
1 \mapsto 1 \ 0 \ 1 \ 1,\qquad
0 \mapsto 1 \ 0 \ 1 \ 0.
\]
\end{lemma}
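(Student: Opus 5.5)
Write $\sigma$ for the substitution $1\mapsto 1011$, $0\mapsto 1010$, and view $(c_n)_{n\ge1}$ as an infinite word $c_1c_2c_3\cdots$ over $\{0,1\}$. A fixed point of $\sigma$ is a word $w=w_1w_2\cdots$ with $\sigma(w)=w$. Because $\sigma$ has constant length $4$, the letter $w_n$ is replaced by the block occupying positions $4n-3,4n-2,4n-1,4n$ of $\sigma(w)$. So the plan is to rewrite the condition $\sigma(c)=c$ as a positionwise identity and then read it off from Corollary~\ref{cor:characteristic-recursion}.

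First, unwind the definition. For every $n\in\mathbb N$, the block of $\sigma(c)$ in positions $4n-3,\dots,4n$ equals $\sigma(c_n)$. Both images of $\sigma$ begin with $101$, and the last letter of $\sigma(a)$ is $a$ itself. Hence $\sigma(c_n)=1\,0\,1\,c_n$ for both possible values of $c_n$. The identity $\sigma(c)=c$ is therefore equivalent to requiring, for all $n\in\mathbb N$,
\[
c_{4n-3}=1,\qquad c_{4n-2}=0,\qquad c_{4n-1}=1,\qquad c_{4n}=c_n.
\]

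These are exactly the identities of Corollary~\ref{cor:characteristic-recursion}. Since the blocks $\{4n-3,\dots,4n\}$ for $n\ge1$ partition $\mathbb N$, every position of $c$ is covered, and we conclude that $\sigma(c)=c$.

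As an optional remark, $c_1=1$, and $\sigma(1)=1011$ begins with $1$. So $c=\lim_{k\to\infty}\sigma^k(1)$ is the unique fixed point of $\sigma$ starting with $1$. This follows because $\sigma^k(1)$ is a prefix of $\sigma^{k+1}(1)$ and its length $4^k$ tends to infinity.

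There is no real obstacle here. The only point that needs care is the indexing: letter $n$ of $w$ maps to positions $4(n-1)+1,\dots,4n$ of $\sigma(w)$. Matching this correctly against the residues in Corollary~\ref{cor:characteristic-recursion} is the whole proof.
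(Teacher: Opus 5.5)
Your proof is correct and follows the paper's argument. Both reduce $\sigma(c)=c$ to the blockwise identity $(c_{4n-3},c_{4n-2},c_{4n-1},c_{4n})=(1,0,1,c_n)$ from Corollary~\ref{cor:characteristic-recursion}, noting that $1\,0\,1\,c_n=\sigma(c_n)$; your extra remark that $c=\lim_{k\to\infty}\sigma^k(1)$ is the unique fixed point beginning with $1$ is also correct, though not needed for the statement.
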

\begin{proof}
By Corollary~\ref{cor:characteristic-recursion}, 
\[
\bigl(c_{4n-3},c_{4n-2},c_{4n-1},c_{4n}\bigr)
=
(1,0,1,c_n).
\]
Thus, if \(c_n=1\), the corresponding block is \(1\ 0\ 1\ 1\), whereas if \(c_n=0\), it is \(1\ 0\ 1\ 0\). Consequently, applying the substitution to
$(c_n)$ reproduces the same sequence, proving that $(c_n)$ is a fixed
point of the substitution.
\end{proof}

\begin{corollary}
\label{non-periodic_c(n)}
The characteristic sequence \((c_n)\) is not periodic.
\end{corollary}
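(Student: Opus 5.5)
Suppose, for contradiction, that $(c_n)$ is periodic. I will allow eventual periodicity as well, since the argument covers it: assume there are $p\geq1$ and $N$ with $c_{n+p}=c_n$ for all $n\geq N$. The plan is to show that the recursion in Corollary~\ref{cor:characteristic-recursion} forces the pattern seen at positions that are multiples of $4^k$ to copy the whole sequence. Then a period $p$ would have to shrink indefinitely, which is impossible.

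The first step is to reduce to the case where $4\mid p$. If $(c_n)$ has period $p$, it also has period $4p$, so I may replace $p$ by $4p$. Having done so, write $p=4p'$ with $p'\geq1$.

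Next, evaluate the periodicity along multiples of $4$. For $n\geq N$ we have $c_{4n+4p'}=c_{4n}$. By Corollary~\ref{cor:characteristic-recursion}, $c_{4n}=c_n$ and $c_{4(n+p')}=c_{n+p'}$, so $c_{n+p'}=c_n$ for all $n\geq N$ (indeed for all $n\geq \lceil N/4\rceil$). Thus $p'=p/4$ is again an eventual period.

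This descent cannot be iterated with the requirement $4\mid p'$, since $p'$ need not be divisible by $4$. Instead I will take $p$ to be the minimal eventual period and argue directly. If $4\mid p$, the previous step produces the strictly smaller eventual period $p/4$, a contradiction; hence $4\nmid p$. Now use the fixed positions: $c_{4m-2}=0$ for every $m$, and $c_{4m-3}=c_{4m-1}=1$. Since $4\nmid p$, we have $p\equiv 1,2,3\pmod 4$.
\begin{itemize}
\item[] If $p\equiv 1 \pmod 4$, compare $c_{4m-2}=0$ with $c_{4m-2+p}=c_{4m-1+(p-1)}$. The index $4m-2+p\equiv 3\pmod 4$, so this value is $1\neq0$, a contradiction.
\item[] If $p\equiv 3\pmod 4$, then $4m-2+p\equiv 1\pmod 4$, so again $c=1\neq 0$.
\item[] If $p\equiv 2\pmod 4$, then $4m-3+p\equiv 3\pmod 4$ gives no contradiction. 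Instead compare $c_{4m-2}=0$ with $c_{4m-2+p}$, where $4m-2+p\equiv 0\pmod 4$, so $c_{4m-2+p}=c_{(4m-2+p)/4}$.
\end{itemize}

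The case $p\equiv2\pmod 4$ is the main obstacle. Periodicity there gives $c_{(4m-2+p)/4}=0$ for all large $m$. As $m$ varies, $(4m-2+p)/4$ runs over all sufficiently large integers, so $c$ would be eventually $0$. This contradicts $c_{4k-3}=1$ for all $k$. Hence no eventual period exists, and in particular $(c_n)$ is not periodic.
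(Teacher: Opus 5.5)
Your proof is correct, but it takes a different route from the paper's.

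The paper never uses the block recursion. Assuming a period $t$, it takes $n=t$ to get $c_{2t}=c_t$. This contradicts Lemma~\ref{Properties_V}(i), since exactly one of $t$ and $2t$ lies in $V$.

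You work only with the recursion of Corollary~\ref{cor:characteristic-recursion}:
\begin{enumerate}[label=(\alph*)]
\item A minimal eventual period $p$ cannot be divisible by $4$, because $c_{4n}=c_n$ turns $p$ into the smaller eventual period $p/4$.
\item The residues $p\equiv 1,3\pmod 4$ are ruled out by the fixed values $c_{4m-3}=c_{4m-1}=1$ and $c_{4m-2}=0$.
\item The residue $p\equiv 2\pmod 4$ forces $c$ to be eventually $0$, since $(4m-2+p)/4=m+(p-2)/4$ runs over all large integers. This contradicts $c_{4k-3}=1$.
\end{enumerate}
These steps check out, including the threshold bookkeeping ($4n\ge N$ whenever $n\ge N$).

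What your route buys is that it excludes eventual periodicity, which the paper's one-line proof does not literally address. It also relies only on the Toeplitz/substitution structure rather than the $2$-adic description of $V$. The cost is length and a case analysis. The paper's idea extends to eventual periodicity almost for free: if $t$ is an eventual period with threshold $N$, then so is every multiple $kt$. Taking $n=kt\ge N$ gives $c_{2kt}=c_{kt}$, again contradicting Lemma~\ref{Properties_V}(i).

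One small point: your opening step (replace $p$ by $4p$, then recover $p$) is vacuous and can be dropped. The minimal-period argument is what does the work.
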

\begin{proof}
Suppose, to the contrary, that \((c_n)\) has a positive integer period \(t\). Then
\[
c_{n+t}=c_n
\]
for every \(n\in\mathbb{N}\). Taking \(n=t\), we obtain
\[
c_{2t}=c_t.
\]
On the other hand, statement~(i) of Lemma~\ref{Properties_V} implies that exactly one of \(t\) and \(2t\) belongs to \(V\). Consequently,
\[
c_{2t}=1-c_t,
\]
which contradicts \(c_{2t}=c_t\). Therefore, \((c_n)\) is not periodic.
\end{proof}

\subsection{Properties of the tiling $T$}

We first record the symmetry of the parameter tiling.
\begin{proposition}
The tiling is symmetric with respect to the main diagonal \(y=x\).
\end{proposition}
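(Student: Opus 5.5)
The claim is that $T(x,y)=T(y,x)$ for every $(x,y)\in\mathbb{N}^2$. I will check this directly from Definition~\ref{def:tiling}; no property of the sets $E(x,y)$ is needed.

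By definition, $T(x,y)=1$ exactly when $c(x)\neq c(y)$. The relation ``$c(x)\neq c(y)$'' is symmetric in its two arguments, since inequality of two values in $\{0,1\}$ does not depend on their order. Hence $c(x)\neq c(y)$ holds if and only if $c(y)\neq c(x)$, which gives $T(x,y)=T(y,x)$. Geometrically, the colour of a lattice point is therefore unchanged under the reflection $(x,y)\mapsto(y,x)$ in the diagonal $y=x$.

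As a cross-check, the same conclusion follows from the fractal side. Lemma~\ref{T:Divisibility of one parameter}(i) gives $E(x,y)=E(y,x)$, and Theorem~\ref{main} identifies $T(x,y)=1$ with $E(x,y)$ having non-empty interior. So the symmetry of $T$ also mirrors the invariance of the set of subsums under rearranging the series.

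Both routes are immediate, so I expect no real obstacle. I would present the one-line argument from the definition as the proof and add the second route as a remark linking it to the earlier lemma.
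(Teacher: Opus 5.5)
Your proposal is correct and matches the paper's proof, which likewise reduces the claim to $T(x,y)=T(y,x)$ and notes that the condition $c(x)\neq c(y)$ is symmetric in $x$ and $y$. Your cross-check via $E(x,y)=E(y,x)$ is also valid; the paper observes this consequence in the proof of Lemma~\ref{T:Divisibility of one parameter}(i).
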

\begin{proof}
The result follows immediately from the equality
\[
T(x,y)=T(y,x)
\]
for all \(x,y\in\mathbb{N}\). Indeed, the condition \(c(x)\neq c(y)\) is symmetric in \(x\) and \(y\).
\end{proof}

\begin{lemma}
Each row and each column of the tiling is either the characteristic sequence of \(V\) or its complement.
\end{lemma}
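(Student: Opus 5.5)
The argument will be a direct unpacking of Definition~\ref{def:tiling}. Fix a row index $y\in\mathbb{N}$ and consider the row $n\mapsto T(n,y)$. The value $c(y)\in\{0,1\}$ is a constant along this row, so there are two cases.

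\emph{Case $c(y)=0$, i.e.\ $y\in D$.} By definition, $T(n,y)=1$ exactly when $c(n)\neq 0$, that is, when $c(n)=1$. Hence $T(n,y)=c(n)=c_n$ for every $n\in\mathbb{N}$. The row is therefore the characteristic sequence $(c_n)$ of $V$.

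\emph{Case $c(y)=1$, i.e.\ $y\in V$.} Now $T(n,y)=1$ exactly when $c(n)=0$. Hence $T(n,y)=1-c_n$, which is the characteristic sequence of $D=\mathbb{N}\setminus V$, the complement.

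Both cases can be recorded in the single formula
\[
T(n,y)=c_n+c(y)-2c_n c(y)\equiv c_n+c(y)\pmod 2,
\]
so $T(\cdot,y)=(c_n)$ when $c(y)=0$ and $T(\cdot,y)=(1-c_n)$ when $c(y)=1$.

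For columns, I will invoke the symmetry $T(x,y)=T(y,x)$ from the preceding proposition: each column is a row read in the transposed coordinates, so the same conclusion holds for columns.

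There is no genuine obstacle here. The only point to be careful about is the interpretation of ``complement'': it means the bitwise complement $(1-c_n)$, which is the characteristic sequence of $D$. As a side remark that may be useful later, it follows that rows indexed by $y\in D$ coincide with $(c_n)$, while rows indexed by $y\in V$ coincide with its complement.
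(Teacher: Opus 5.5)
Your proof is correct and follows the paper's approach: fix one coordinate, split on whether it lies in $V$ or $D$, and read off $T=c$ or $T=1-c$ along that line. The only difference is that you obtain the columns from the diagonal symmetry $T(x,y)=T(y,x)$, while the paper simply repeats the argument with the coordinates interchanged.
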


\begin{proof}
Fix \(x\in\mathbb{N}\). If \(x\in D\), then \(c(x)=0\), and therefore
\[
T(x,y)=c(y)
\]
for every \(y\in\mathbb{N}\). Hence, the corresponding row is the characteristic sequence \((c_n)\).

If \(x\in V\), then \(c(x)=1\), and therefore
\[
T(x,y)=1-c(y)
\]
for every \(y\in\mathbb{N}\). Hence, the corresponding row is the complementary sequence \((1-c_n)\).

The same argument, with \(x\) and \(y\) interchanged, applies to every column. This completes the proof.
\end{proof}

\begin{theorem}\label{aperiodicthm} 
The parameter tiling $T$ is aperiodic; that is, there is no non-zero translation vector \((a,b)\in\mathbb{N}_0^2\) such that
\[
T(x+a,y+b)=T(x,y)
\]
for all \(x,y\in\mathbb{N}\).
\end{theorem}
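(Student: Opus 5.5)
We argue by contradiction. Suppose $(a,b)\in\mathbb{N}_0^2\setminus\{(0,0)\}$ satisfies $T(x+a,y+b)=T(x,y)$ for all $x,y\in\mathbb{N}$. Writing $T(x,y)=c(x)\oplus c(y)$ (addition mod $2$), the hypothesis becomes
\[
c(x+a)\oplus c(x)=c(y+b)\oplus c(y)\qquad\text{for all }x,y\in\mathbb{N}.
\]
The left side depends only on $x$ and the right side only on $y$, so both equal a constant $\kappa\in\{0,1\}$. Hence $c(x+a)=c(x)\oplus\kappa$ for all $x$, and $c(y+b)=c(y)\oplus\kappa$ for all $y$. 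The proof therefore splits into showing that neither shift relation can hold for a positive shift.

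First we handle the case $a=0$, so that $b>0$. Then $\kappa=c(x)\oplus c(x)=0$, and the relation for $y$ says $c(y+b)=c(y)$ for all $y$. This makes $b$ a period of $(c_n)$, which contradicts Corollary~\ref{non-periodic_c(n)}. The case $b=0$ with $a>0$ is symmetric.

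Now suppose $a,b>0$. If $\kappa=0$, then $a$ is a period of $(c_n)$, again contradicting Corollary~\ref{non-periodic_c(n)}. If $\kappa=1$, then $c(x+a)=1-c(x)$, so $c(x+2a)=c(x)$ for all $x$. Thus $2a$ is a positive period, and Corollary~\ref{non-periodic_c(n)} gives a contradiction once more.

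With every case excluded, no non-zero translation vector exists. The only mildly delicate step is the separation-of-variables observation that forces the common value $\kappa$. Beyond that, the key idea is to double the shift in the antiperiodic case $\kappa=1$, which turns antiperiodicity into genuine periodicity and lets Corollary~\ref{non-periodic_c(n)} apply.
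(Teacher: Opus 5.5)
Your proof is correct and follows essentially the same route as the paper: both reduce translation invariance to $c(x+a)=c(x)\oplus\kappa$ for a fixed $\kappa\in\{0,1\}$ and rule out the periodic and antiperiodic cases, the paper getting $\kappa$ by restricting to a row $y'\in D$ where you use separation of variables. The only real variation is the antiperiodic case $\kappa=1$: you double the shift to the genuine period $2a$ and reuse Corollary~\ref{non-periodic_c(n)}, while the paper gives explicit witnesses ($x=1$ if $a$ is even, $x=4$ if $a$ is odd), using that odd numbers and $4$ lie in $V$.
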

\begin{proof}
Suppose, to the contrary, that the tiling is invariant under a non-zero translation vector
\[
(a,b)\in\mathbb{N}_0^2.
\]
If \(a=0\), then \(b>0\). By the symmetry of the tiling with respect to the main diagonal, invariance under \((0,b)\) implies invariance under \((b,0)\). Thus, after interchanging the coordinates if necessary, we may assume that \(a>0\).

Fix \(y'\in D\). Since \(c(y')=0\), we have
\[
T(x,y')=c(x)
\]
for every \(x\in\mathbb{N}\). We shall show that
\[
T(x,y')\neq T(x+a,y'+b)
\]
for some \(x\in\mathbb{N}\). There are two cases.

\begin{enumerate}[label=(\roman*)]
\item Suppose that \(y'+b\in D\). Then
\[
T(x+a,y'+b)=c(x+a)
\]
for every \(x\in\mathbb{N}\). Translational invariance would therefore imply
\[
c(x)=c(x+a)
\]
for every \(x\in\mathbb{N}\), making \(a\) a positive period of the characteristic sequence. This contradicts Corollary~\ref{non-periodic_c(n)}. In particular, taking \(x=a\), as we saw in Lemma~\ref{Properties_V}(i),
\[
c(a)\neq c(2a).
\]

\item Suppose that \(y'+b\in V\). Then
\[
T(x+a,y'+b)=1-c(x+a)
\]
for every \(x\in\mathbb{N}\). Thus, translational invariance would imply
\[
c(x)=1-c(x+a)
\]
for every \(x\in\mathbb{N}\).

If \(a\) is even, take \(x=1\). Both \(1\) and \(1+a\) are odd and therefore belong to \(V\). Hence,
\[
c(1)=c(1+a)=1,
\]
which contradicts \(c(1)=1-c(1+a)\).

If \(a\) is odd, take \(x=4\). We have \(4\in V\), while \(4+a\) is odd and therefore also belongs to \(V\). Consequently,
\[
c(4)=c(4+a)=1,
\]
which contradicts \(c(4)=1-c(4+a)\).
\end{enumerate}

Both cases lead to a contradiction. Therefore, the tiling has no non-zero translation vector in \(\mathbb{N}_0^2\) and is aperiodic.
\end{proof}

\begin{theorem}\label{subthm}
The parameter tiling $T$ is a fixed point of the substitution defined as follows:

\begin{center}
\begin{tabular}{cc}
\(\displaystyle
V\times V \ni (x,y) \mapsto
\begin{array}{r}
\circ \bullet \circ \circ \\[-1.5ex]
\circ \bullet \circ \circ \\[-1.5ex]
\bullet \circ \bullet \bullet \\[-1.5ex]
\circ \bullet \circ \circ
\end{array}
\)
& \quad \quad
\(\displaystyle
V\times D \ni (x,y) \mapsto
\begin{array}{r}
\circ \bullet \circ \bullet \\[-1.5ex]
\circ \bullet \circ \bullet \\[-1.5ex]
\bullet \circ \bullet \circ \\[-1.5ex]
\circ \bullet \circ \bullet
\end{array}
\)
\\[3em]
\(\displaystyle
D\times V \ni (x,y) \mapsto
\begin{array}{r}
\bullet \circ \bullet \bullet \\[-1.5ex]
\circ \bullet \circ \circ \\[-1.5ex]
\bullet \circ \bullet \bullet \\[-1.5ex]
\circ \bullet \circ \circ
\end{array}
\)
& \quad \quad
\(\displaystyle
D\times D \ni (x,y) \mapsto
\begin{array}{r}
\bullet \circ \bullet \circ \\[-1.5ex]
\circ \bullet \circ \bullet \\[-1.5ex]
\bullet \circ \bullet \circ \\[-1.5ex]
\circ \bullet \circ \bullet
\end{array}
\)
\end{tabular}
\end{center}
\end{theorem}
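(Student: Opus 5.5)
The plan is to reduce the statement to Corollary~\ref{cor:characteristic-recursion} applied coordinatewise. Since $T(x,y)=c(x)\oplus c(y)$ (addition mod $2$), the tiling is the ``XOR outer product'' of the sequence $(c_n)$ with itself. The substitution sends a cell $(x,y)$ to the $4\times4$ block of cells
\[
\{(4x-3+i,\,4y-3+j): 0\le i,j\le 3\}.
\]
These blocks partition $\mathbb N^2$ exactly once, because $n\mapsto(\lceil n/4\rceil,\ n-4\lceil n/4\rceil+3)$ is a bijection from $\mathbb N$ onto $\mathbb N\times\{0,1,2,3\}$. Hence being a fixed point amounts to checking one identity: for every $(x,y)$, the colours of $T$ on the block of $(x,y)$ coincide with the pattern the substitution assigns to the type of $(x,y)$, that is, to which of $V\times V$, $V\times D$, $D\times V$, $D\times D$ contains $(x,y)$.

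\textbf{Step 1.} By Corollary~\ref{cor:characteristic-recursion}, for every $x$ we have
\[
\bigl(c_{4x-3},c_{4x-2},c_{4x-1},c_{4x}\bigr)=(1,0,1,c(x)),
\]
and similarly for $y$. Therefore
\[
T(4x-3+i,\,4y-3+j)=u_i\oplus w_j,
\]
where $u=(1,0,1,c(x))$ and $w=(1,0,1,c(y))$. In particular, the block depends only on the pair $(c(x),c(y))$, which is exactly the type of $(x,y)$. This already shows that a well-defined four-state substitution exists.

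\textbf{Step 2.} Compute the four $4\times4$ matrices $(u_i\oplus w_j)_{i,j}$ for $c(x),c(y)\in\{0,1\}$. For example, when $c(x)=c(y)=1$, each line of the block indexed by a coordinate with $c$-value $1$ reads $(0,1,0,0)$, and each line indexed by a coordinate with $c$-value $0$ reads $(1,0,1,1)$. These are the rows $\circ\bullet\circ\circ$ and $\bullet\circ\bullet\bullet$ of the $V\times V$ picture, with $\bullet$ denoting black ($T=1$). The other three cases are analogous, differing only in the fourth entry of $u$ or $w$.

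\textbf{Step 3.} Fix the drawing convention and compare. The pictures must be read in the same orientation as Figure~\ref{tiling}: rows listed from top to bottom correspond to decreasing values of the vertical coordinate, so the top row is the one with index $4\cdot(\text{coordinate})$. The horizontal and vertical axes must then be matched with the first and second letters of the type label, and this matching is fixed by the two asymmetric cases $V\times D$ and $D\times V$. Reading the $D\times V$ block, the top row $\bullet\circ\bullet\bullet=(1,0,1,1)$ is $0\oplus(1,0,1,1)$. So the top row is indexed by the coordinate with $c$-value $0$, and the horizontal direction carries the vector $(1,0,1,1)$. After settling this, I check the remaining three pictures entrywise; each is a finite computation. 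Finally, the fixed-point property follows by induction on the level of substitution, or simply from the block identity of Step 1 holding at every cell.

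The main obstacle is not mathematical but bookkeeping: getting the orientation and axis conventions of the displayed blocks consistent with the figure, and consistent across all four types simultaneously. I would settle this first using the two asymmetric types, and then verify the symmetric types $V\times V$ and $D\times D$, which must be invariant under transpose, as a consistency check.
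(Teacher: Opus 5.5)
Your proposal is correct and follows essentially the same route as the paper. Both apply Corollary~\ref{cor:characteristic-recursion} to each coordinate, so that each $4\times4$ block is the entrywise XOR of $(1,0,1,c(x))$ and $(1,0,1,c(y))$, and then observe that these blocks partition $\mathbb N^2$. The only difference is that you read the orientation off the asymmetric pictures instead of declaring it. The convention you arrive at agrees with the one the paper fixes explicitly: the vertical direction is indexed bottom-to-top by $4x-3,\dots,4x$, and the horizontal direction left-to-right by $4y-3,\dots,4y$.
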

\begin{proof}
Recall that
\[
\bigl(c(4n-3),c(4n-2),c(4n-1),c(4n)\bigr)
=
(1,0,1,c(n))
\]
for every \(n\in\mathbb{N}\).

In each \(4\times4\) block, the columns from left to right correspond to the second-coordinate values
\[
4y-3,\ 4y-2,\ 4y-1,\ 4y,
\]
whereas the rows from bottom to top correspond to the first-coordinate values
\[
4x-3,\ 4x-2,\ 4x-1,\ 4x.
\]
Therefore, the characteristic values along the horizontal and vertical directions are
\[
(1,0,1,c(y))
\qquad\text{and}\qquad
(1,0,1,c(x)),
\]
respectively.

By definition,
\[
T(u,v)=1
\quad\Longleftrightarrow\quad
c(u)\neq c(v).
\]
Hence, comparing the corresponding horizontal and vertical characteristic values produces one of the four displayed blocks, according to whether
\[
(c(x),c(y))
\]
is equal to
\[
(1,1),\qquad(1,0),\qquad(0,1),\qquad(0,0).
\]
These four cases correspond respectively to
\[
V\times V,\qquad
V\times D,\qquad
D\times V,\qquad
D\times D.
\]

Since the sets
\[
\{4n-3,4n-2,4n-1,4n\},
\qquad n\in\mathbb{N},
\]
partition \(\mathbb{N}\), the resulting \(4\times4\) blocks partition the entire tiling. This proves the stated quaternary substitution rule.
\end{proof}

The following corollary is a reframing of Theorem \ref{subthm}.

\begin{corollary}
\label{cor:factor-substitution}
The parameter tiling $T$ is a factor of a substitution tiling on the
four-state alphabet
\[
\mathcal{A}=\{(1,1), (1,0), (0,1), (0,0)\}.
\]
Let
\[
\sigma(1)=1011,
\qquad
\sigma(0)=1010,
\]
and define
\[
\Sigma(a,b)_{ij}
=
\bigl(\sigma(a)_i,\sigma(b)_j\bigr),
\qquad
(a,b)\in\mathcal{A},\quad 1\leq i,j\leq4.
\]
Thus $\Sigma$ is a constant-length $4\times4$ block substitution on
$\mathcal{A}$. Define the coding
\[
\psi\colon\mathcal{A}\to\{0,1\}
\]
by
\[
\psi(1,1)=\psi(0,0)=0,
\qquad
\psi(1,0)=\psi(0,1)=1.
\]
Then $T$ is obtained by applying $\psi$ entrywise to the substitution
tiling generated by $\Sigma$.
\end{corollary}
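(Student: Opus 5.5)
We need to show two things: the four-state array $S(x,y):=(c(x),c(y))$ is a fixed point of $\Sigma$, and $T=\psi\circ S$ entrywise.

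\textbf{Step 1 (the coding).} By Definition~\ref{def:tiling}, $T(x,y)=1$ exactly when $c(x)\neq c(y)$. This is precisely $\psi(c(x),c(y))$, because $\psi$ sends the two "equal" states $(1,1),(0,0)$ to $0$ and the two "unequal" states to $1$. So $T=\psi\circ S$ entrywise, and $S$ is the natural four-state lift of $T$, recording the $V/D$ type of each coordinate.

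\textbf{Step 2 (fixed point of $\Sigma$).} Index each $4\times4$ block by the cells $(4x-4+i,\,4y-4+j)$ with $1\le i,j\le 4$, so $i=1,\dots,4$ runs over $4x-3,\dots,4x$. We must check that the block of $S$ at position $(x,y)$ equals $\Sigma(S(x,y))$. The key input is the identity from the proof of the earlier substitution lemma (a consequence of Corollary~\ref{cor:characteristic-recursion}):
\[
\bigl(c(4n-3),c(4n-2),c(4n-1),c(4n)\bigr)=\sigma(c(n)).
\]
This gives $c(4x-4+i)=\sigma(c(x))_i$ and $c(4y-4+j)=\sigma(c(y))_j$. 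Hence
\[
S(4x-4+i,4y-4+j)=\bigl(\sigma(c(x))_i,\sigma(c(y))_j\bigr)=\Sigma(S(x,y))_{ij}.
\]
Since the blocks $\{4n-3,\dots,4n\}^2$ partition $\mathbb N^2$, this shows $\Sigma(S)=S$.

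\textbf{Step 3 (generation from a seed).} Starting from the seed $S(1,1)=(1,1)$, iterating $\Sigma$ generates $S$, because $\Sigma(1,1)_{11}=(1,1)$; the limit is therefore the unique fixed point with that corner. Applying $\psi$ to this substitution tiling recovers $T$, which is exactly the claim. As a sanity check, one can apply $\psi$ to $\Sigma(a,b)$ and compare with the four pictured blocks of Theorem~\ref{subthm}.

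The only real obstacle is orientation and indexing bookkeeping. We need to match the convention that $i$ indexes the first coordinate and $j$ the second with the row/column convention used in Theorem~\ref{subthm}'s pictures. Since the statement of the corollary is phrased purely through indices, I will simply fix that convention and not rely on the pictures.
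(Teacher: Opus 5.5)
Your proposal is correct and follows essentially the same route as the paper, which treats this corollary as a reframing of Theorem~\ref{subthm}: both arguments rest on the identity $\bigl(c(4n-3),c(4n-2),c(4n-1),c(4n)\bigr)=\sigma(c(n))$ together with the fact that the $4\times 4$ blocks $\{4x-3,\dots,4x\}\times\{4y-3,\dots,4y\}$ partition $\mathbb{N}^2$. Your explicit lift $S(x,y)=(c(x),c(y))$ and your seed argument via $\Sigma(1,1)_{11}=(1,1)$ make precise which fixed point of $\Sigma$ is meant, which the paper leaves implicit.
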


\section{Final remarks} \label{final}

\subsection{Motivation from parameter sets and outlook}

Our work was originally motivated by a recurring phenomenon in the study
of parametrised dynamical systems; namely that a classification of the individual
systems often reveals a further layer of structure in parameter space.
Perhaps the most familiar examples are the Mandelbrot set and the Rauzy
gasket. The Mandelbrot set arises by classifying complex
quadratic polynomials according to the connectedness of their Julia
sets, while the Rauzy gasket arises from a classification of dynamical
behaviour for a family of interval exchange transformations. In both
cases, the resulting parameter sets exhibit rich fractal structure,
with their recursive geometry reflecting renormalisation phenomena in
the underlying dynamical systems.

Our parameter tiling exhibits an analogous phenomenon in a discrete
setting. The aperiodic ordered geometry of $T$ is analogous to the unexpected ``ordered complexity'' (fractal nature) of the Mandelbrot set and the Rauzy gasket. Similarly, the substitution structure of $T$ is a discrete analogue of the recursive organisation produced by
renormalisation in the classical parameter spaces above. 

\begin{figure}[ht]
    \centering
    \begin{subfigure}{0.48\textwidth}
        \centering
        \includegraphics[width=\textwidth]{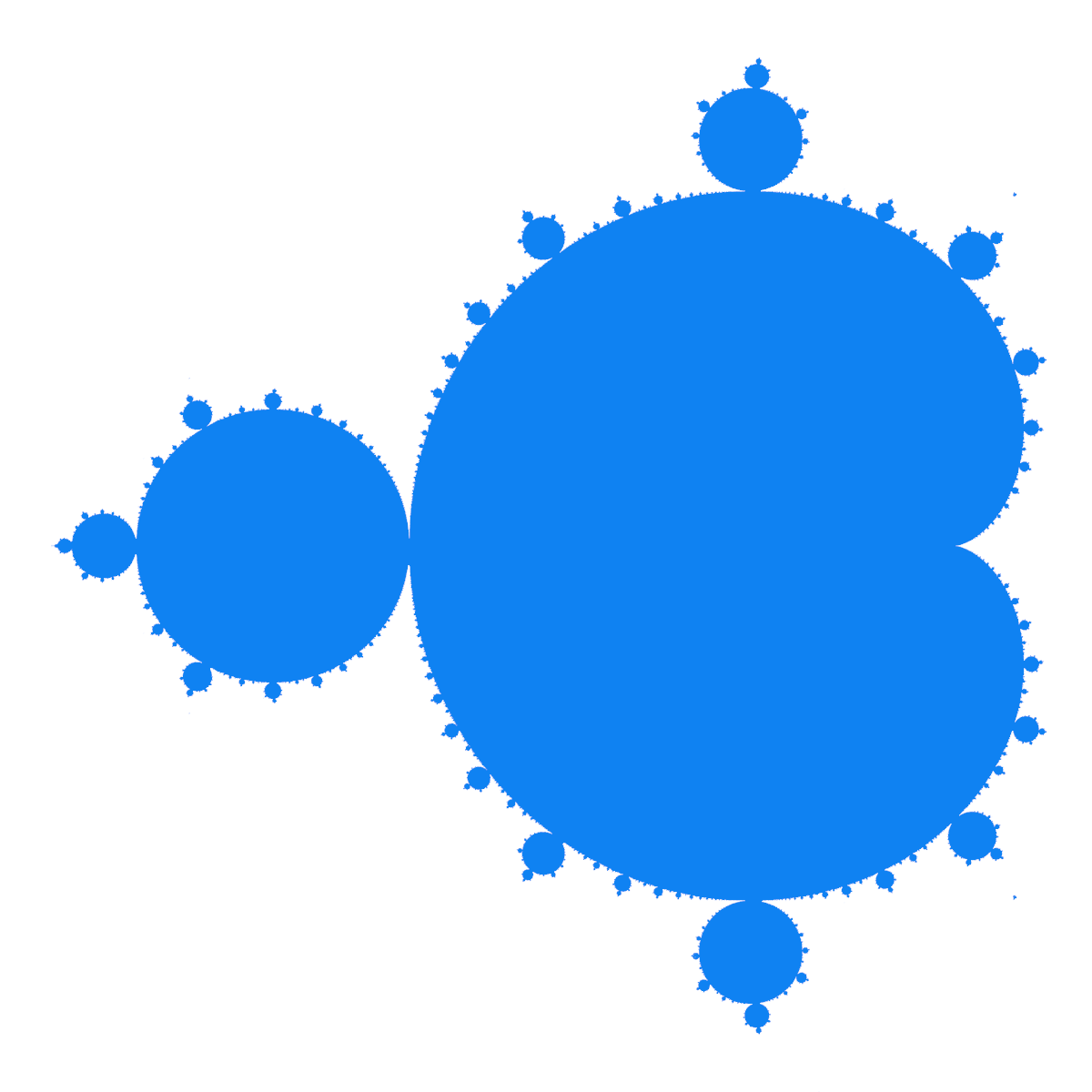}
    \end{subfigure}
    \hfill
    \begin{subfigure}{0.48\textwidth}
        \centering
        \includegraphics[width=\textwidth]{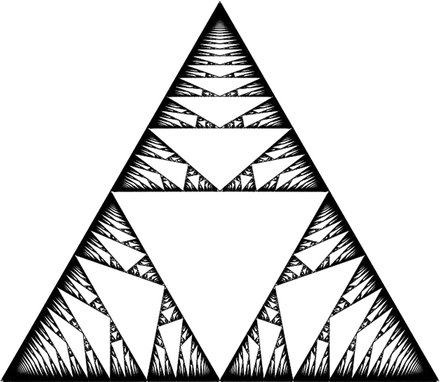}
    \end{subfigure}
    \caption{The Mandelbrot set and the Rauzy gasket are classical examples of fractals that emerge in parameter space through the classification of dynamical systems in a parametrised family.}
    \label{fig:two-images}
\end{figure}

The family studied in this paper is deliberately rigid, and it is
therefore natural to ask whether this phenomenon is specific to our
model or reflects a more general principle. In particular, one may ask:

\begin{quote}
\emph{For which natural classes of self-similar sets are the parameters
exhibiting dimension drop organised by an underlying dynamical or
combinatorial order?}
\end{quote}

More specifically, can such parameter sets exhibit substitution
structure? If so, can this be related systematically to the algebraic mechanisms responsible for dimension drop?

\subsection{The family with a variable contraction ratio}
It is worth noting that $q=1/4$ is the smallest positive contraction ratio for which the self-similar sets
$$
E_q(x,y)=\left\{ \sum_{n=1}^{\infty} \varepsilon_n q^n : (\varepsilon_n) \in \{0,x,y,x+y\}^{\mathbb{N}} \right\}
$$
of subsums of the corresponding bi-geometric series can exhibit all three possible topological types as $x,y\in\mathbb{N}$ vary. For $0<q<1/4$, the similarity dimension of the iterated function system defining $E_q(x,y)$ is less than $1$, implying that $E_q(x,y)$ is a Cantor set. It is also easy to show that if $1/2 \leq q < 1$, then
$$
E_q(x,y)=\left[0,\frac{q(x+y)}{1-q}\right].
$$
In the case $1/4<q<1/2$, the sets $E_q(x,y)$ can also have various topological types. However, their analysis is quite complex and remains largely unexplored in the general case.

\subsection{The case of rational parameters}

One can easily extend the results of Section~3 to the case of non-zero rational parameters $x$ and $y$. Indeed, let $x=p_1/q_1$ and $y=p_2/q_2$ for some $p_1,p_2\in\mathbb{Z}\setminus\{0\}$ and $q_1,q_2\in\mathbb{N}$. According to Lemma~\ref{T:Divisibility of one parameter}, $E(x,y)$ and $E_{\nu}(\eta x,\eta y)$ have the same topological type for arbitrary $\eta,\nu\in\mathbb{R}$ with $\eta\neq0$. Setting
$$
\eta=q_1q_2
\quad\text{and}\quad
\nu=-\min\left\{0,\,p_1q_2,\,p_2q_1,\,p_1q_2+p_2q_1\right\},
$$
we obtain
$$
E_{\nu}(\eta x,\eta y)
=
E\left(|p_1q_2|,|p_2q_1|\right).
$$
Thus, the problem is reduced to the case of natural parameters studied in the paper.

\subsection{The open set condition}

In the cases where $\mathcal{F}_{x,y}$ satisfies the OSC, the open set in the OSC can be chosen as $O=\operatorname{Int}E(x,y).
$
Indeed, each similarity $f_d(z)=(z+d)/4$, where $d\in\{0,x,y,x+y\}$, maps $O$ into itself, and the sets $f_d(O)$ are pairwise disjoint.
Thus, in Section 3.4, we found a necessary and sufficient condition under which a union of closed intervals is a homogeneous self-similar set that satisfies the open set condition. Problems of this kind were addressed in \cite{FHJ07}. For those sets $E(x,y)$ that are Cantorvals, the open set $\operatorname{Int}E(x,y)$ consists of countably many pairwise disjoint open intervals, whereas its boundary has a fractal structure. This demonstrates that even in the seemingly simple setting where the defining iterated function system satisfies the open set condition, its attractor may possess a rather intricate topological and fractal structure.

\subsection{Topological classification for self-similar sets}

As mentioned previously, there has been considerable work on the
topological classification of achievement sets of numerical series. A
fundamental result in this direction is the theorem of
\cite{NS}, which states that every achievement set of a convergent
positive series is either a finite union of intervals, a Cantor set, or a
Cantorval. Since not every self-similar set is an achievement set, this
raises the following natural question:

\begin{quote}
\emph{Can self-similar sets in the line be rigidly classified into a
finite collection of topological types?}
\end{quote}


\end{document}